\documentclass{ws-ijbc}
\usepackage{ws-rotating}     
\usepackage{graphicx}
\usepackage{epstopdf}
\usepackage{amsfonts, amsbsy, amssymb, amsmath, float, subfigure, natbib, color, url}




\begin{document}

\catchline{}{}{}{}{} 

\markboth{Lopesino et al.}{The Chaotic Saddle in the Lozi map, autonomous and non-autonomous versions.}

\title{\Large{\textbf{THE CHAOTIC SADDLE IN THE LOZI MAP, AUTONOMOUS AND NONAUTONOMOUS VERSIONS}}}



\author{CARLOS LOPESINO\footnote{\textit{Instituto de Ciencias Matem\'aticas, CSIC-UAM-UC3M-UCM, Madrid Spain, 
carlos.lopesino@icmat.es}}}
\address{Instituto de Ciencias Matem\'aticas, CSIC-UAM-UC3M-UCM, \\ C/ Nicol\'as Cabrera 15, Campus Cantoblanco UAM, 
28049\\Madrid, Spain\\ carlos.lopesino@icmat.es}

\author{FRANCISCO BALIBREA-INIESTA\footnote{\textit{Instituto de Ciencias Matem\'aticas, CSIC-UAM-UC3M-UCM, Madrid 
Spain, francisco.balibrea@icmat.es}}}
\address{Instituto de Ciencias Matem\'aticas, CSIC-UAM-UC3M-UCM, C/ Nicol\'as Cabrera 15, Campus Cantoblanco UAM, 
28049\\Madrid, Spain\\ francisco.balibrea@icmat.es}

\author{STEPHEN WIGGINS\footnote{\textit{School of Mathematics, University of Bristol, Bristol, United Kingdom, 
maxsw@bristol.ac.uk}}}
\address{School of Mathematics, University of Bristol BS8 1TW\\Bristol, United Kingdom\\ maxsw@bristol.ac.uk}

\author{ANA M. MANCHO\footnote{\textit{Instituto de Ciencias Matem\'aticas, CSIC-UAM-UC3M-UCM, Madrid 
Spain, a.m.mancho@icmat.es}}}
\address{Instituto de Ciencias Matem\'aticas, CSIC-UAM-UC3M-UCM, C/ Nicol\'as Cabrera 15, Campus Cantoblanco UAM, 
28049\\Madrid, Spain\\ a.m.mancho@icmat.es}

\maketitle

\begin{history}
\received{(to be inserted by publisher)}
\end{history}

\begin{abstract}
  In this paper we prove the existence of a chaotic saddle for a piecewise linear map of the plane, referred to as the 
Lozi map. We study the Lozi map in its orientation and area preserving version. First, we consider the autonomous 
version of the Lozi map to which we apply the Conley-Moser conditions to obtain the proof of a chaotic saddle. Then we 
generalize the Lozi map on a non-autonomous version and we prove that the first and the third Conley-Moser conditions 
are satisfied, which imply the existence of a chaotic saddle. Finally, we numerically demonstrate how the structure of 
this nonautonomous chaotic saddle varies as parameters are varied.
\end{abstract}

\keywords{Chaotic Saddle, autonomous dynamics, nonautonomous dynamics, Lozi map, Conley-Moser conditions.}

\section{Introduction}\label{sec:intro}

In this paper we prove that the Lozi map (\cite{Lozi}), as well as a nonautonomous generalization of the Lozi map, 
possesses a chaotic saddle, i.e. a hyperbolic invariant set on which the dynamics is topologically conjugate to a 
Bernoulli shift.  Our construction uses the Conley-Moser conditions for autonomous maps as developed in \cite{Moser} 
(see also \cite{Wiggins03}) and the nonautonomous Conley-Moser conditions as developed in \cite{Wiggins99} and 
\cite{balibrea}. For earlier work in a similar spirit as the Conley-Moser conditions see \cite{vma_a,vma_b,vma_c}.

Previously, the autonomous Conley-Moser conditions have been  used by \cite{Dev79} to show the existence of a chaotic invariant set for the H\'enon map  and by \cite{Holmes} and \cite{ChasBerGem} to show the existence of a chaotic invariant set for the bouncing ball map.

While the development of the ``dynamical systems approach to nonautonomous dynamics'' is currently a topic of much interest in the pure mathematics community, 
it is not a topic that is widely known in the applied dynamical systems community 
(especially the fundamental work that was done in the 1960's). An applied motivation for such work is an understanding from the dynamical systems point of view of fluid transport for aperiodically time dependent flows. \cite{wm} have given a survey of the history of nonautonomous dynamics as well as its application to fluid transport. Earlier work on chaos in nonautonomous systems is described in \cite{ls,stoffa,stoffb}.

This paper is outlined as follows. In section \ref{sec:intro} we introduce the setup of the problem. The  definitions and theorems given in this section make clear what we mean by the phrase “ chaotic invariant set ” for both autonomous and nonautonomous maps. In sections \ref{sec:Loziaut} and \ref{sec:Lozinaut} we construct chaotic invariant sets (i.e., {\em chaotic saddles}) for both  the autonomous and nonautonomous versions of the Lozi map,  respectively. In section \ref{sec:DLD} we show how these sets are detected using the Discrete  Lagrangian Descriptor (see \cite{Lopesino}) for different parameter values, both in the autonomous or in the nonautonomous case. Finally, in section \ref{sec:summ} we summarize our results.

\section{Setup and geometry of the problem}
\label{sec:setup}

 In this section we recall the set-up for the autonomous  Conley-Moser conditions that were introduced by \cite{Moser} and the nonautonomous Conley-Moser conditions introduced in \cite{Wiggins99}. We follow  the structure in
  \cite{Wiggins03} for our exposition, but with an inverse notation (that is $f \equiv L^{-1}$ in the autonomous case and $f_n \equiv 
  L_n^{-1}$ in the nonautonomous case, where we use the notation $L$ in the autonomous case and $L_n^{-1}$ in the nonautonomous case to denote the general form for  the maps under consideration, rather than $f$ and $f_n$, respectively, from the original references, since $L$ is traditionally used to refer to the Lozi map).  

  \subsection{The Autonomous Conley-Moser Conditions}

  We begin with the following two definitions.
  
  \begin{definition}
  A $\mu_v$-vertical curve is the graph of a function $x=v(y)$ for which
  $$-R \leq v(y) \leq R, \qquad |v(y_1)-v(y_2)| \leq \mu_v |y_1-y_2| \quad \text{for } -R \leq y_1, y_2 \leq R.$$
  Similarly, a $\mu_h$-horizontal curve is the graph of a function $y=h(x)$ for which
  $$-R \leq h(x) \leq R, \qquad |h(x_1)-h(x_2)| \leq \mu_h |x_1-x_2| \quad \text{for } -R \leq x_1, x_2 \leq R.$$
  \end{definition}
  
  \begin{definition}
  Given two nonintersecting $\mu_v$-vertical curves $v_1(y) < v_2(y)$, $y \in [-R,R]$, we define a $\mu_v$-vertical 
  strip as
  $$V = \left\lbrace (x,y) \in \mathbb{R}^2 | x \in [v_1(y),v_2(y)]; \quad y \in [-R,R] \right\rbrace .$$
  Similarly, given two nonintersecting $\mu_h$-horizontal curves $h_1(x) < h_2(x)$, $x \in [-R,R]$, we define a 
  $\mu_h$-horizontal strip as
  $$H = \left\lbrace (x,y) \in \mathbb{R}^2 | y \in [h_1(x),h_2(x)]; \quad x \in [-R,R] \right\rbrace .$$
  The width of horizontal and vertical strips is defined as
  $$d(H) = \max_{x\in[-R,R]} |h_2(x) - h_1(x)|,$$
  $$d(V) = \max_{y\in[-R,R]} |v_2(y) - v_1(y)|.$$
  \end{definition}  
  
  \par
  \noindent
  Keeping these definitions in mind, we begin with the Conley Moser conditions for the autonomous case. We consider a map
  
  $$L : D \longrightarrow \mathbb{R}^2,$$
  
  \noindent
 where $D$ is a square in $\mathbb{R}^2$, i.e.,
 
  $$D = \left\lbrace (x,y) \in \mathbb{R}^2 | -R \leq x \leq R, \quad -R \leq y \leq R \right\rbrace .$$

  \noindent
  Let
  
  $$I = \left\lbrace 1,2, ... ,N \right\rbrace , \qquad (N \geq 2),$$
  \noindent
  be an index set, and let
  
  $$H_i, \qquad i \in I$$
  
  \noindent
  be a set of disjoint $\mu_h$-horizontal strips and let
  
  $$V_i, \qquad i \in I$$
  \noindent
  be a set of disjoint $\mu_v$-vertical strips. Suppose that $L$ satisfies the following two conditions.
  \par

  \noindent
  \textbf{Assumption 1:} $0 \leq \mu_v \mu_h < 1$ and $L$ maps $V_i$ homeomorphically onto $H_i$, ($L(V_i) = H_i$) for 
  $i \in I$. Moreover, the horizontal boundaries of $V_i$ map to the horizontal boundaries of $H_i$ and the vertical 
  boundaries of $V_i$ map to the vertical boundaries of $H_i$.
  \par

  \noindent
  \textbf{Assumption 2:} Suppose $H$ is a $\mu_h$-horizontal strip contained in $\cup_{i \in I} H_i$. Then

  $$L(H) \cup H_i \equiv \tilde{H}_i$$
  
  \noindent
  is a $\mu_h$-horizontal strip for every $i \in I$. Moreover,

  $$d(\tilde{H}_i) \leq \nu_h d(H) \qquad \text{for some} \quad 0 < \nu_h < 1.$$
  \par

  \noindent
  Similarly, suppose $V$ is a $\mu_v$-vertical strip contained in $\cup_{i \in I}V_i$. Then
  
  $$L^{-1}(V) \cap V_i \equiv \tilde{V}_i$$
  
 \noindent
 is a $\mu_v$-vertical strip for every $i \in I$. Moreover,

  $$d(\tilde{V}_i) \leq \nu_v d(V) \qquad \text{for some} \quad 0 < \nu_v < 1.$$
  
  Then we have the following theorem.
  
  \begin{theorem}
  Suppose $L$ satisfies \textit{Assumptions} $1$ and $2$. Then $L$ has an invariant Cantor set, $\Lambda$, on which it 
  is topologically conjugate to a full shift on $N$ symbols, i.e., the following diagram commutes
  \begin{equation}
  \begin{array}{ccccc} 
  & & L & & \\ & \Lambda & \longrightarrow & \Lambda \\ \\ \phi & \downarrow & & \downarrow & \phi 
  \\ & & \sigma & & \\ & \Sigma^N & \longrightarrow & \Sigma^N & \\ 
  \end{array} 
  \end{equation}
  where $\phi$ is a homeomorphism mapping $\Lambda$ onto $\Sigma^N$ and $\sigma$ denotes the shift map acting on on space of bi-infinite sequence of $N$ symbols, denoted by $\Sigma^N$.
  \end{theorem}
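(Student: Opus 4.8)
The plan is to construct $\Lambda$ as the set of points whose entire orbit under $L$ remains in $\bigcup_{i\in I} V_i$, and to let $\phi$ assign to each such point its \emph{itinerary}: the bi-infinite sequence $(s_k)_{k\in\mathbb{Z}}$ determined by $L^k(p)\in V_{s_k}$. Since the $V_i$ are disjoint this itinerary is unambiguous, so $\phi:\Lambda\to\Sigma^N$ is well defined, and the whole argument reduces to showing that $\phi$ is a bijection, that it is a homeomorphism, and that it intertwines $L$ with the shift $\sigma$. The intertwining is immediate from the definition: if $p$ has itinerary $(s_k)$, then $L^k(L(p))=L^{k+1}(p)\in V_{s_{k+1}}$, so $\phi(L(p))=\sigma(\phi(p))$, which is exactly the commutativity of the stated diagram.

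To prove surjectivity and injectivity I would realize each symbol sequence geometrically. By Assumption~1, $L$ carries each $V_i$ onto $H_i$ respecting boundaries, and the vertical half of Assumption~2 shows that for every finite forward block the set
\[
V_{s_0 s_1\cdots s_n}=V_{s_0}\cap L^{-1}(V_{s_1})\cap\cdots\cap L^{-n}(V_{s_n})
\]
is again a $\mu_v$-vertical strip, while the horizontal half shows that for every finite backward block the set
\[
H_{s_{-1}s_{-2}\cdots s_{-n}}=H_{s_{-1}}\cap L(H_{s_{-2}})\cap\cdots\cap L^{\,n-1}(H_{s_{-n}})
\]
is a $\mu_h$-horizontal strip. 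Applying the width estimates of Assumption~2 inductively gives $d(V_{s_0\cdots s_n})\le 2R\,\nu_v^{\,n}$ and $d(H_{s_{-1}\cdots s_{-n}})\le 2R\,\nu_h^{\,n}$, so these nested families of compact strips shrink in width to zero. Using this together with the uniform Lipschitz bound on their bounding graphs, I would conclude that the forward intersection $\bigcap_{n\ge 0}V_{s_0\cdots s_n}$ is a single $\mu_v$-vertical curve and the backward intersection $\bigcap_{n\ge 1}H_{s_{-1}\cdots s_{-n}}$ is a single $\mu_h$-horizontal curve.

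The point associated with $(s_k)$ is then the intersection of the vertical curve $x=v(y)$ with the horizontal curve $y=h(x)$; substitution reduces existence and uniqueness to a fixed point of $x\mapsto v(h(x))$, which is a contraction precisely because $\mu_v\mu_h<1$, so the Banach fixed point theorem yields exactly one intersection. This simultaneously gives surjectivity of $\phi$ (every sequence is realized) and injectivity (two points with the same itinerary lie in the same one-point intersection). Finally I would check that $\phi$ is a homeomorphism: the width estimates show that sequences agreeing on a central block $|k|\le n$ correspond to points confined to a common strip whose diameter is controlled by $\nu_v^{\,n}$ and $\nu_h^{\,n}$, giving continuity of $\phi^{-1}$, and the positive separation of the disjoint compact strips $V_i$ (together with continuity of each $L^k$ on $D$) shows that sufficiently close points share a long central block, giving continuity of $\phi$; since $\Lambda$ is closed in the compact square $D$ and $\Sigma^N$ is Hausdorff, this continuous bijection is automatically a homeomorphism, and $\Lambda$ inherits the Cantor-set structure of $\Sigma^N$. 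I expect the most delicate step to be the passage from the nested strips to genuine limiting Lipschitz curves and the resulting two-sided continuity of $\phi$, since this is where the Lipschitz graph conditions and the hypothesis $\mu_v\mu_h<1$ must be combined carefully; the unique-intersection property itself is a clean application of the contraction mapping principle.
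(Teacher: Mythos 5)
Your proposal is correct and is precisely the classical Conley--Moser argument (itinerary map, nested vertical/horizontal strips shrinking geometrically by Assumption~2, unique intersection of a $\mu_v$-vertical with a $\mu_h$-horizontal curve via the contraction $x\mapsto v(h(x))$ using $\mu_v\mu_h<1$, then compactness to upgrade the continuous bijection to a homeomorphism). The paper does not prove this theorem itself but defers to \cite{Moser} and \cite{Wiggins03}, and the proof given there is essentially the one you outline, so there is nothing further to compare.
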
  

More details on the map $\sigma$ and the space $\Sigma^N$ can be found in \cite{Wiggins03}, as well as in comments following \eqref{eq:metric}.

\subsection{The Nonautonomous Conley-Moser Conditions}
  
  Next we describe the setting for  the nonautonomous Conley-Moser conditions. These conditions were generalized by \cite{Wiggins99} but we are using the notation used in \cite{balibrea},where a more detailed discussion of the definitions is given.
  
  \par 
  \noindent
  \begin{definition}
  Let $D \subset \mathbb{R}^{2}$ denote a closed and bounded set. We define its projections as
  
  $$D_x = \left\lbrace x \in \mathbb{R} \text{ for which there exists a } y \in \mathbb{R} \text{ with } (x,y) \in D 
\right\rbrace$$

  $$D_y=  \left\lbrace y \in \mathbb{R} \text{ for which there exists a } x \in \mathbb{R} \text{ with } (x,y) \in D 
\right\rbrace$$

  \end{definition}
  
  \noindent
  $D_x$ and $D_y$  represent the projections of $D$ onto the $x$ axis and the $y$ axis, respectively. Let $I_x$ be 
  a closed interval contained in $D_x$ and let $I_y$ be a closed interval contained in $D_y$.
  
  \begin{definition}
  Let $0 \leq \mu_h < \infty$. A $\mu_h$-horizontal curve, $\bar{H}$, is defined to be the graph of a function $h: I_x 
  \longrightarrow \mathbb{R}$ where $h$ satisfies the following two conditions:
  
  \begin{enumerate}
  \item The set $\bar{H} = \lbrace (x,h(x)) \in \mathbb{R} \times \mathbb{R} \text{ such that } x \in I_x \rbrace $ is 
  contained in $D$.
  \item For every $x_1$, $x_2 \in I_x$, we have
  
  \end{enumerate}
  
  \begin{equation}
  |h(x_1) - h(x_2)| \leq \mu_h |x_1 - x_2|.
  \end{equation}
  
  \noindent
  Similarly, let $0 \leq \mu_v < \infty$. A $\mu_v$-vertical curve, $\bar{V}$, is defined to be the graph of a 
  function $v: I_y \longrightarrow \mathbb{R}$ where $v$ satisfies the following two conditions:
  
  \begin{enumerate}
  \item The set $\bar{V} = \lbrace (v(y),y) \in \mathbb{R} \times \mathbb{R} \text{ such that } y \in I_y \rbrace $ is 
  contained in $D$.
  \item For every $y_1$, $y_2 \in I_y$, we have
  \end{enumerate}
  
  \begin{equation}
  |v(y_1) - v(y_2)| \leq \mu_v |y_1 - y_2|.
  \end{equation}
  
  \end{definition}
  
  \noindent
  Now we can define two dimensional strips by using  these horizontal and vertical curves.

  \begin{definition}
  Given two nonintersecting $\mu_v$-vertical curves $v_1(y) < v_2(y)$, $y \in I_y$, we define a $\mu_v$ vertical strip 
  as
  $$V = \lbrace (x,y) \in \mathbb{R}^2 \text{ such that } x \in [v_1(y), v_2(y)] \text{; } y \in I_y \rbrace .$$
  Similarly, given two nonintersecting $\mu_h$-horizontal curves $h_1(x) < h_2(x)$, $x \in I_x$, we define a $\mu_h$ 
  horizontal strip as
  
  $$H = \lbrace (x,y) \in \mathbb{R}^2 \text{ such that } y \in [h_1(x), h_2(x)] \text{; } x \in I_x \rbrace .$$
  
  \noindent
  The width of horizontal and vertical strips is defined as 
  
  $$d(H) = \max_{x\in I_x} |h_2(x) - h_1(x)|,$$
  
  $$d(V) = \max_{y\in I_y} |v_2(y) - v_1(y)|.$$
  \end{definition}
  
  \noindent
  Additionally, we  define horizontal and vertical boundaries of the strips.
  
  \begin{definition}
  The vertical boundary\footnote{The symbol $\partial$ is the usual notation from topology denoting the boundary of a set. In this paper we further refine this notion by referring to horizontal boundaries, $\partial_h$, and vertical boundaries, $\partial_v$.} of a $\mu_h$-horizontal strip $H$ is denoted $\partial_vH$ and  is defined as
  $$\partial_vH = \lbrace (x,y) \in H \text{ such that } x \in \partial I_x \rbrace.$$
  The horizontal boundary of a $\mu_h$-horizontal strip $H$ is denoted $\partial_hH$ and  is defined as
  $$\partial_hH \equiv \partial H - \partial_vH.$$
  We can similarly the boundaries of $\mu_v$-vertical strips.
  \end{definition}
  
  \noindent
  Now we need to define another kind of strip which will appear in the nonautonomous Conley-Moser  conditions.
  
  \begin{definition}
  We say that $H$ is a $\mu_h$-horizontal strip contained in a $\mu_v$-vertical strip $V$ if the two $\mu_h$-horizontal 
  curves defining the vertical boundary of $H$ are contained in $V$, with the remaining boundary components of $H$ 
  contained in $\partial_vV$.
  \end{definition}

  \noindent
  The two $\mu_h$-horizontal curves defining the horizontal boundary of $H$ are referred to as the horizontal boundary 
  of $H$, and the remaining boundary components are referred to as the vertical boundary of $H$. This is described in detail  in the next definition.

  \begin{definition}
  Let $\tilde{H}$ be a $\mu_h$-horizontal strip contained in a  $\mu_v$-vertical strip, $V$. We define the boundaries of 
  $\tilde{H}$ as
  $$\partial_v\tilde{H} = \lbrace (x,y) \in \partial \tilde{H} \text{ such that } (x,y) \in \partial_vH \rbrace = 
  \partial \tilde{H} \cap \partial_vH ,$$
  and
  $$\partial_h \tilde{H} = \partial \tilde{H} - \partial_v \tilde{H}.$$
  The boundaries of $\tilde{V} \mu_v$-vertical strip contained in $H \mu_h$-horizontal strip are defined analogously.  
  \end{definition}

  \noindent
  We will be interested in the behaviour of $\mu_v$-vertical strips under maps. We want to focus in the case when the 
  image of a $\mu_v$-vertical strip intersects its preimage. 
  
  \begin{definition}
  Let $V$ and $\tilde{V}$ be $\mu_v$-vertical strips. $\tilde{V}$ is said to intersect $V$ fully if $\tilde{V} 
  \subset V$ and $\partial_h \tilde{V} \subset \partial_hV$.
  \end{definition}

  Now we can state the main theorem for the nonauntonomous case. Let $\lbrace L_{n},D_{n} \rbrace_{n=-\infty}^{+ \infty}$ a sequence of maps with
  
  \begin{equation} 
  L_{n}: D_{n} \longrightarrow D_{n+1} \text{ ,} \quad \forall n \in \mathbb{Z} \quad \text{and} \quad 
  L^{-1}_{n}: D_{n+1} \longrightarrow D_{n}
  \end{equation}
  
  \noindent
  in case the corresponding inverse function exists.
  \newline
  \\
 We require that on each domain $D_{n}$ there exists a finite collection 
  of vertical strips $V_{i}^{n} \subset D_{n}$ ($\forall n \in \mathbb{Z}$ and $\forall i \in I = \lbrace 1,2,...,N 
  \rbrace $) which map into a finite collection of horizontal strips located in $D_{n+1}$:
  
  \begin{equation}
  H_{i}^{n+1} \subset D_{n+1} \quad \text{with} \quad L_{n}(V_{i}^{n})=H_{i}^{n+1} \text{ ,} \quad 
  \forall n \in \mathbb{Z} \text{ ,} \quad i\in I
  \end{equation}
  
  \noindent
  We also need to define
  
  \begin{equation}
  \begin{array}{rcl}
  H^{n+1}_{ij} & \equiv & H^{n+1}_i \cap V^{n+1}_j \\
               &        &              \\
  V^n_{ji}     & \equiv & L^{-1}_n (V^{n+1}_j) \cap V^n_i \\
  \end{array}
  \end{equation}

  Following this idea we introduce the definition of transition matrix associated to a sequence of maps $\lbrace 
  L_{n},D_{n} \rbrace_{n=-\infty}^{+ \infty}$,
  
  $$ A \equiv \lbrace A^{n} \rbrace_{n=-\infty}^{+\infty} \text{ is a sequence of matrices of dimension } N \times N 
  \text{ such that}$$
  
  $$ A_{ij}^{n}= \begin{cases} 1 \quad \quad \text{if } L_{n}(V_{i}^{n}) \cap V_{j}^{n+1} \not = \emptyset \\ 0 \quad 
  \quad \text{otherwise} \end{cases} \text{or equivalently} $$

  \begin{equation}
  A_{ij}^{n}= \begin{cases} 1 \quad \quad \text{if } H_{i}^{n+1} \cap V_{j}^{n+1} = H^{n+1}_{ij} \not = \emptyset \\ 0 
  \quad \quad \text{otherwise} \end{cases} \forall i,j \in I \hspace{1.65cm}
  \end{equation}
  \\
  which will be needed for applying the Conley-Moser conditions to a given sequence of maps $\lbrace L_{n},D_{n} 
  \rbrace_{n=-\infty}^{+ \infty}$ and then proving the existence of a chaotic invariant set.
  \newline
  \\
  \textbf{Assumption 1:} For all $i,j \in I$ such that $A_{ij}^{n}=1$, $H_{ij}^{n+1}$ is a $\mu_{h}$-horizontal strip 
  contained in $V_{j}^{n+1}$ with $0 \leq \mu_{v} \mu_{h} <1$. Moreover, $L_{n}$ maps $V_{ji}^{n}$ homeomorphically 
  onto $H_{ij}^{n+1}$ with $L_{n}^{-1}(\partial_{h}H_{ij}^{n+1}) \subset \partial_{h}V_{i}^{n}$.
  
  \begin{remark}
  The fact that every non empty $H_{ij}^{n+1} \subset D_{n+1}$ is a $\mu_{h}$-horizontal strip contained in 
  $V_{j}^{n+1}$ shows that the two $\mu_{h}$-horizontal curves which form the boundary ($\partial_{h} 
  L_{n}(V_{i}^{n}) = \partial_{h}H_{i}^{n+1}$) cut the horizontal boundary of $V_{i}^{n+1}$ in exactly four points.
  \end{remark}
  
  \noindent
  Furthermore, since $L_{n}$ is one-to-one on $D_{V}^{n} \equiv \cup_{i=1}^{N}V_{i}^{n}$ then we can define an 
  inverse function $L_{n}^{-1}$ on $L_{n}(D_{V}^{n})=\cup_{i=1}^{N}L_{n}(V_{i}^{n}) \equiv \cup_{i=1}^{N}H_{i}^{n+1}$.
  \newline
  \\
  And since $L_{n}$ maps $V_{ji}^{n}$ homeomorphically onto $H_{ij}^{n+1}$ with $L_{n}^{-1}(\partial_{h}H_{ij}^{n+1}) 
  \subset \partial_{h}V_{i}^{n}$ then $L_{n}^{-1}$ maps $H_{ij}^{n+1}$ homeomorphically onto $V_{ji}^{n}$ ($\forall i,j 
  \in I$) with
  
  \begin{equation} 
  L_{n} \left( L_{n}^{-1}(\partial_{h}H_{ij}^{n+1}) \right) =  \partial_{h}H_{ij}^{n+1} \subset L_{n}( \partial_{h} 
V_{i}^{n})
  \end{equation}
  
  \noindent
  \textbf{Assumption 2:} Let $V^{n+1}$ be a $\mu_{v}$-vertical strip which intersects $V_{j}^{n+1}$ fully. Then 
  $L_{n}^{-1}(V^{n+1}) \cap V_{i}^{n} \equiv \widetilde{V}_{i}^{n}$ is a $\mu_{v}$-vertical strip intersecting 
  $V_{i}^{n}$ fully for all $i \in I$ such that $A_{ij}^{n}=1$. Moreover,
  
  \begin{equation} 
  d(\widetilde{V}_{i}^{n}) \leq \nu_{v} \text{ } d(V^{n+1}) \quad \quad \text{for some } 0< \nu_{v} < 1 
  \end{equation}
  \\
  Similarly, let $H^{n}$ be a $\mu_{h}$-horizontal strip contained in $V_{i}^{n}$ such that also $H^{n} \subset 
  H_{ji}^{n}$ for some $i,j \in I$ with $A_{ji}^{n-1}=1$. Then $L_{n}(H^{n}) \cap V_{k}^{n+1} \equiv 
  \widetilde{H}_{k}^{n+1}$ is a $\mu_{h}$-horizontal strip contained in $V_{k}^{n+1}$ for all $k \in I$ such that 
  $A_{ik}^{n}=1$. Moreover,
  
  \begin{equation} 
  d(\widetilde{H}_{k}^{n+1}) \leq \nu_{h} \text{ } d(H^{n}) \quad \quad \text{for some } 0< \nu_{h} < 1.
\label{eq:metric} 
  \end{equation}

  We also need to adapt some definitions from symbolic dynamics. Let
  
  \begin{equation} 
  s=( \cdots s_{n-k} \cdots s_{n-2} s_{n-1}. s_{n} s_{n+1} \cdots s_{n+k} \cdots ) 
  \end{equation}
  denote a bi-infinite sequence with $s_{l} \in I$ ($\forall l\in \mathbb{Z}$) where adjacent elements of the sequence 
  satisfy the rule $A_{s_{n}s_{n+1}}^{n}=1$, $\forall n \in \mathbb{Z}$.
  \newline
  \\
We denote the set of all such symbol 
  sequences by $\Sigma_{ \lbrace A^{n} \rbrace }^{N}$. If $\sigma$ denotes the shift map
  \begin{equation} 
  \sigma (s)= \sigma ( \cdots s_{n-2} s_{n-1}. s_{n} s_{n+1} \cdots ) =  ( \cdots s_{n-2} s_{n-1} 
  s_{n}. s_{n+1} \cdots )
  \end{equation}
  on $\Sigma_{ \lbrace A^{n} \rbrace }^{N}$, we define the `extended shift map' $\tilde{\sigma}$ on $\widetilde{\Sigma} 
  \equiv \Sigma_{ \lbrace A^{n} \rbrace }^{N} \times \mathbb{Z}$ by
  
  \begin{equation} 
  \tilde{\sigma}(s,n)=(\sigma (s),n+1). \text{ It is also defined } f(x,y;n)=(L_{n}(x,y),n+1). 
  \end{equation}

  \noindent
  We now can  state the main theorem.
  \newline
  \\
  \begin{theorem}\label{main_nonautonomous_theorem}
  Suppose $\lbrace L_{n},D_{n} \rbrace_{n=-\infty}^{+\infty}$ satisfies A1 and A2. There exists a sequence of sets 
  $\Lambda_{n} \subset D_{n}$, with $L_{n}(\Lambda_{n})=\Lambda_{n+1}$, such that the following diagram commutes
  \begin{equation}
  \begin{array}{ccccc} 
  & & f & & \\ & \Lambda_{n} \times \mathbb{Z} & \longrightarrow & \Lambda_{n+1} 
  \times \mathbb{Z} & \\ \\ \phi & \downarrow & & \downarrow & \phi \\ & & \tilde{\sigma} & & \\ & \Sigma_{ \lbrace 
  A^{n} 
  \rbrace }^{N} \times \mathbb{Z} & \longrightarrow & \Sigma_{ \lbrace A^{n} \rbrace }^{N} \times \mathbb{Z} & 
  \end{array} 
  \end{equation}
  where $\phi (x,y;n) \equiv (\phi_{n}(x,y),n)$ with $\phi_{n}(x,y)$ a homeomorphism mapping $\Lambda_{n}$ onto 
  $\Sigma_{\lbrace A^{n} \rbrace }^N$.
  \end{theorem}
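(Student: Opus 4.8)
The plan is to adapt Moser's construction of the symbolic conjugacy to the nonautonomous regime by attaching, to each admissible bi-infinite sequence $s \in \Sigma_{\lbrace A^n \rbrace}^N$, a single point whose orbit realises $s$ as its itinerary, and then showing that the inverse itinerary map $\phi_n$ is the claimed homeomorphism. I would work one time-slice at a time, carrying the integer label $n$ along throughout, so that the extra $\mathbb{Z}$-factor and the extended shift $\tilde\sigma$ on $\Sigma_{\lbrace A^n \rbrace}^N \times \mathbb{Z}$ are accounted for automatically; the map $f(x,y;n)=(L_n(x,y),n+1)$ then inherits the conjugacy from the fibrewise statement $\phi_{n+1}\circ L_n = \sigma \circ \phi_n$.

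First I would build the two families of limit curves. Fix $s$ and fix $n$. Using the first part of A2 repeatedly, the pullbacks $L_n^{-1}(\cdots L_{n+k-1}^{-1}(V_{s_{n+k}}^{n+k})\cap \cdots)\cap V_{s_n}^n$ of the future vertical strips form a nested sequence of $\mu_v$-vertical strips in $D_n$, each intersecting the previous one fully, whose widths are bounded by $\nu_v^{\,k}\,d(V_{s_{n+k}}^{n+k})\to 0$. By completeness this nested family closes down to a single $\mu_v$-vertical curve $\mathcal{V}_n(s)$ depending only on the forward tail $s_n s_{n+1}\cdots$. Symmetrically, pushing the past horizontal strips forward by $L_{n-1},L_{n-2},\dots$ and intersecting with $V_{s_n}^n$ produces, via the horizontal contraction $\nu_h$ of the second part of A2, a nested tower of $\mu_h$-horizontal strips contracting to a single $\mu_h$-horizontal curve $\mathcal{H}_n(s)$ depending only on the backward part $\cdots s_{n-1}s_n$. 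The admissibility rule $A_{s_\ell s_{\ell+1}}^\ell=1$ is precisely what keeps every strip in these towers nonempty, by the definition of the transition matrix, so each admissible $s$ genuinely yields both curves.

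Next I would set $\Lambda_n := \lbrace \mathcal{V}_n(s)\cap \mathcal{H}_n(s) : s \in \Sigma_{\lbrace A^n \rbrace}^N \rbrace$ and define $\phi_n$ as the inverse of $s \mapsto \mathcal{V}_n(s)\cap \mathcal{H}_n(s)$. The hypothesis $0\le \mu_v\mu_h<1$ forces a $\mu_v$-vertical curve and a $\mu_h$-horizontal curve to meet in exactly one point, via a short Lipschitz fixed-point argument, so the intersection is a well-defined single point and $\phi_n$ is a bijection onto $\Sigma_{\lbrace A^n \rbrace}^N$. To promote $\phi_n$ to a homeomorphism I would endow the symbol space with the usual product metric: continuity of $\phi_n^{-1}$ follows from the geometric width bounds of A2 and \eqref{eq:metric}, since two sequences agreeing on the central block $s_{n-k}\cdots s_{n+k}$ yield intersection points lying $O(\nu_v^{\,k}+\nu_h^{\,k})$ apart, while continuity of $\phi_n$ follows from the pairwise disjointness of the strips $V_i^n$, which makes each coordinate of the itinerary locally constant.

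Finally I would verify invariance and conjugacy. Since A1 guarantees that $L_n$ maps $V_{ji}^n$ homeomorphically onto $H_{ij}^{n+1}$ respecting horizontal boundaries, $L_n$ carries the entire nested vertical/horizontal tower for $s$ at time $n$ onto the corresponding tower for $\sigma(s)$ at time $n+1$; hence $L_n\bigl(\mathcal{V}_n(s)\cap\mathcal{H}_n(s)\bigr)=\mathcal{V}_{n+1}(\sigma s)\cap\mathcal{H}_{n+1}(\sigma s)$, giving $L_n(\Lambda_n)=\Lambda_{n+1}$ and $\phi_{n+1}\circ L_n = \sigma\circ\phi_n$ on $\Lambda_n$. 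Pairing this with the shift $n\mapsto n+1$ yields $\phi\circ f = \tilde\sigma\circ\phi$ for $\phi(x,y;n)=(\phi_n(x,y),n)$, which is the asserted commuting diagram. I expect the main obstacle to be the first step: proving rigorously, in the nonautonomous setting where the ambient domains $D_n$ and the strips themselves change with $n$, that the nested intersections really do close down to Lipschitz curves with the stated constants $\mu_v,\mu_h$ and at uniform geometric rate. This is exactly where the uniformity of $\nu_v,\nu_h$ across all $n$ in A2 is indispensable, and where the bookkeeping over the varying time-slices is most delicate.
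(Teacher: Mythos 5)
Your construction is correct and is essentially the proof the paper relies on: the paper states this theorem without proof, deferring to \cite{Wiggins99} and \cite{balibrea}, and those references carry out exactly the nested-strip/itinerary argument you outline (forward tail $\Rightarrow$ vertical curve via $\nu_v$-contraction, backward tail $\Rightarrow$ horizontal curve via $\nu_h$-contraction, unique intersection point from $\mu_v\mu_h<1$, conjugacy from A1's boundary-preserving homeomorphisms). The only implicit point worth making explicit is that the initial widths $d(V_{s_{n+k}}^{n+k})$ and $d(H_{s_{n-k}s_{n-k+1}}^{n-k+1})$ must be uniformly bounded over $n$ for the factors $\nu_v^{\,k},\nu_h^{\,k}$ to force the nested intersections to collapse to single curves --- automatic in the Lozi application since the domains are uniformly bounded.
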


  \begin{remark}
  We are referring to $\left\lbrace \Lambda_n \right\rbrace$ as an infinite sequence of chaotic invariant sets. Defining
  $$\Lambda \equiv \cup_{n \in \mathbb{Z}} \Lambda_n,$$
  then $\Lambda$ contains an uncontable infinity of orbits where each orbit is unstable (of saddle type), and the 
  dynamics on the invariant set exhibits sensitive dependence on initial conditions.
  \end{remark}
  
  \noindent
  As we will see, it can be difficult to verify  Assumption 2 in specific examples. For that reason we will define the third condition 
  known as the 'cone condition'. Before stating this condition, we define the following
  
  \begin{equation} { \cal V}^{n} = \bigcup_{i,j \in I}V_{ij}^{n} \equiv \bigcup_{i,j \in I}V_{i}^{n} \cap 
  L_{n}^{-1}(V_{j}^{n+1}),
  \end{equation}
  
  \begin{equation} { \cal H}^{n+1} = \bigcup_{i,j \in I}H_{ji}^{n+1} \equiv \bigcup_{i,j \in I} V_{j}^{n+1} \cap 
  L_{n}(V_{i}^{n}) , \quad L_{n}({ \cal V}^{n})={ \cal H}^{n+1}
  \end{equation}
  
  \begin{equation} S^{s}_{{ \cal K}} = \lbrace (\xi_{z},\eta_{z}) \in \mathbb{R}^{2} \text{ } | \text{ } | \eta_{z} | 
  \leq \mu_{v} | \xi_{z} | \text{, } z \in { \cal K} \rbrace
  \end{equation}
  
  \begin{equation} S^{u}_{{ \cal K}} = \lbrace (\xi_{z},\eta_{z}) \in \mathbb{R}^{2} \text{ } | \text{ } | \xi_{z} | 
  \leq \mu_{h} | \eta_{z} | \text{, } z \in { \cal K} \rbrace,
  \end{equation}

\noindent
with $\cal K$ being either ${\cal V}^n$ or ${\cal H}^{n+1}$.
  
  Now we can state assumption 3.
  \newline
  \\
  \textbf{Assumption 3: The cone condition.} $Df_{n}(S^{u}_{{ \cal V}^{n}})\subset S^{u}_{ { \cal H}^{n+1}}$, 
  $Df^{-1}_{n}(S^{s}_{{ \cal H}^{n+1}})\subset S^{s}_{{ \cal V}^{n}}$.
  \\
  Moreover, if $(\xi_{L_{n}(z_{0}^{n})}, \eta_{L_{n}(z_{0}^{n})}) \equiv DL_{n}(z_{0}^{n}) \cdot (\xi_{z_{0}^{n}}, 
  \eta_{z_{0}^{n}}) \in S^{u}_{{ \cal H}^{n+1}}$ then 
  
  \begin{equation} 
  | \eta_{L_{n}(z_{0}^{n})} | \geq \left( \frac{1}{\mu} \right) | \eta_{z_{0}^{n}} |
  \end{equation}
  
  \noindent
  If $(\xi_{L_{n}^{-1}(z_{0}^{n+1})}, \eta_{L_{n}^{-1}(z_{0}^{n+1})}) \equiv DL_{n}^{-1}(z_{0}^{n+1}) \cdot 
  (\xi_{z_{0}^{n+1}}, \eta_{z_{0}^{n+1}}) \in S^{s}_{{ \cal V}^{n}}$ then 
  
  \begin{equation} 
  | \xi_{L_{n}^{-1}(z_{0}^{n+1})} | \geq \left( \frac{1}{\mu} \right) | \xi_{z_{0}^{n+1}} | \quad \text{for } \mu > 0 
  \end{equation}
  
  We have the following theorem.
  
  \begin{theorem} 
  If nonautonomous A1 and A3 are satisfied for $0< \mu < 1-\mu_{h}\mu_{v}$ then A2 is satisfied and so the Theorem 
  \ref{main_nonautonomous_theorem} holds.
  \end{theorem}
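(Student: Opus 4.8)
The plan is to verify the two halves of Assumption 2 --- the claim that the relevant intersections are again $\mu_v$-vertical (resp. $\mu_h$-horizontal) strips intersecting the reference strips fully, together with the accompanying width estimates --- directly from A1 and A3, treating the vertical-strip statement (under $L_n^{-1}$) and the horizontal-strip statement (under $L_n$) as mirror images of one another. I would carry out the vertical case in full and then invoke this symmetry. So fix a $\mu_v$-vertical strip $V^{n+1}$ intersecting $V_j^{n+1}$ fully, fix an index $i$ with $A_{ij}^n=1$, and set $\widetilde{V}_i^n = L_n^{-1}(V^{n+1})\cap V_i^n$.

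First I would settle the qualitative claims. The vertical boundary of $\widetilde{V}_i^n$ consists of the $L_n^{-1}$-images of the vertical boundary curves of $V^{n+1}$, and the cone-invariance inclusions of A3 are exactly what guarantee that these image curves still have their tangents in the relevant cone, so that they remain $\mu_v$-vertical curves and hence $\widetilde{V}_i^n$ is a $\mu_v$-vertical strip. That $\widetilde{V}_i^n$ intersects $V_i^n$ fully is where A1 enters: the boundary-mapping condition $L_n^{-1}(\partial_h H_{ij}^{n+1})\subset\partial_h V_i^n$, combined with the full intersection of $V^{n+1}$ with $V_j^{n+1}$, forces $\partial_h\widetilde{V}_i^n\subset\partial_h V_i^n$, the remaining (vertical) boundary being inherited from $\partial_v V^{n+1}$.

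The analytically substantial step, and the one I expect to be the main obstacle, is the width contraction $d(\widetilde{V}_i^n)\le\nu_v\,d(V^{n+1})$ with $\nu_v<1$. Here I would take a horizontal chord realizing the width of $\widetilde{V}_i^n$ at the height where the maximum is attained, push it forward by $L_n$, and compare the resulting chord joining the two boundary curves of $V^{n+1}$. The quantitative half of A3 --- the $1/\mu$ lower bounds on the expanding component --- controls the infinitesimal growth of this chord and contributes a factor $\mu$. The difficulty is that the width is a secant quantity measured at a \emph{fixed} height, whereas the derivative bounds are infinitesimal and the two endpoint images land at \emph{different} heights, so the comparison does not close on its own. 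Using the $\mu_h$- and $\mu_v$-Lipschitz bounds on the curves to absorb this height mismatch turns the comparison into a self-referential inequality for the width, and solving it yields $\nu_v = \mu/(1-\mu_h\mu_v)$; the hypothesis $0<\mu<1-\mu_h\mu_v$ (together with $\mu_h\mu_v<1$ from A1) is precisely the condition guaranteeing $\nu_v<1$.

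Finally, the horizontal-strip statement is obtained by the mirror argument, with $L_n$ in place of $L_n^{-1}$, the cone $S^s$ in place of $S^u$, and the second quantitative bound of A3 in place of the first; after interchanging the roles of the two coordinates it is identical and gives $\nu_h = \mu/(1-\mu_h\mu_v)<1$. With both parts of A2 established, Theorem \ref{main_nonautonomous_theorem} applies and yields the stated conclusion.
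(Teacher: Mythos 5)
The paper does not actually prove this theorem itself---its entire proof is the single line ``The proof can be found in \cite{balibrea}.''---and your outline reconstructs precisely the standard Conley--Moser sector-bundle argument used in that reference (and in Moser's and Wiggins' original treatments): cone invariance of A3 plus the boundary conditions of A1 give the qualitative strip properties, and the $1/\mu$ expansion estimates, combined with using the $\mu_h$-, $\mu_v$-Lipschitz bounds to absorb the height mismatch of the endpoint images, yield the contraction factor $\nu_v=\nu_h=\mu/(1-\mu_h\mu_v)$, which is less than $1$ exactly under the hypothesis $0<\mu<1-\mu_h\mu_v$. So your proposal is correct and takes essentially the same approach as the proof the paper points to.
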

  
  \begin{proof}
  The proof can be found in \cite{balibrea}.
  \end{proof}  

\section{The  autonomous Lozi map}
\label{sec:Loziaut}

In this section we show that assumptions 1 and 2 of the Conley-Moser conditions hold for the autonomous Lozi map. Typically, it has been difficult to show that assumption 2 holds for specific maps. However, the relatively simple form of the Lozi map allows us to demonstrate assumption 2 explicitly.

  The (autonomous) Lozi  map is defined as $L(x,y) = (1 + y - a|x|, bx)$, where $a,b\in \mathbb{R}$:

  \begin{equation}
  L(x,y) = (1 + y - a|x|, \quad -x)
  \end{equation}
  
  \noindent
  This map is invertible with inverse

  \begin{equation}
  L^{-1}(x,y) = (-y, \quad x + a|y| - 1). 
  \end{equation}
  
  \noindent
 For our purposes we will consider the situation where the Lozi map is orientation and area preserving, which occurs  when $b=-1$. Henceforth, $b$ will be fixed at this value and we will view $a$ as a parameter.
  
The setup for the autonomous problem is as follows. We consider the map
  
  $$L:S \longmapsto \mathbb{R}^2,$$
  
  \noindent
  where the square $S = \left \{ (x,y) \in \mathbb{R}^2 : |x| \leq R, |y| \leq R \right \}$ and the boundaries of the square are
  
  $$\begin{array}{ll}
  L_1 = \left \{ (x,y) \in \mathbb{R}^2 | y = R \right \}, & L_2 = \left \{ (x,y) \in \mathbb{R}^2 | y =
  -R \right \}, \\
         &                                                    \\
  L_3 = \left \{ (x,y) \in \mathbb{R}^2 | x = R \right \}, & L_4 = \left \{ (x,y) \in \mathbb{R}^2 | x
  = -R \right \}. \\  
  \end{array}$$
  
  \noindent
  Furthermore, we need to define the subsets of the boundaries of the square $S$ as follows:
  
  $$\begin{array}{ll}
  L^+_1 = \left \{ (x,y) \in \mathbb{R}^2 | x \geq 0, y = R \right \}, & L^-_1 = \left \{ (x,y) \in
  \mathbb{R}^2 | x < 0, y = R \right \}, \\
         &                                                    \\
  L^+_2 = \left \{ (x,y) \in \mathbb{R}^2 | x \geq 0, y = -R \right \}, & L^-_2 = \left \{ (x,y) \in
  \mathbb{R}^2 | x < 0, y = -R \right \},  \\
         &                                                    \\
  L^+_3 = \left \{ (x,y) \in \mathbb{R}^2 | x = R, y \geq 0 \right \}, & L^-_3 = \left \{ (x,y) \in
  \mathbb{R}^2 | x = R, y < 0 \right \},\\
         &                                                    \\
  L^+_4 = \left \{ (x,y) \in \mathbb{R}^2 | x = -R, y \geq 0 \right \}, & L^-_4 = \left \{ (x,y) \in
  \mathbb{R}^2 | x = -R, y < 0 \right \},\\         
  \end{array}$$
  
  \noindent
  as shown  in Figure \ref{Square}.
  
  \begin{figure}[ht]
    \centering
    \includegraphics[scale=0.6]{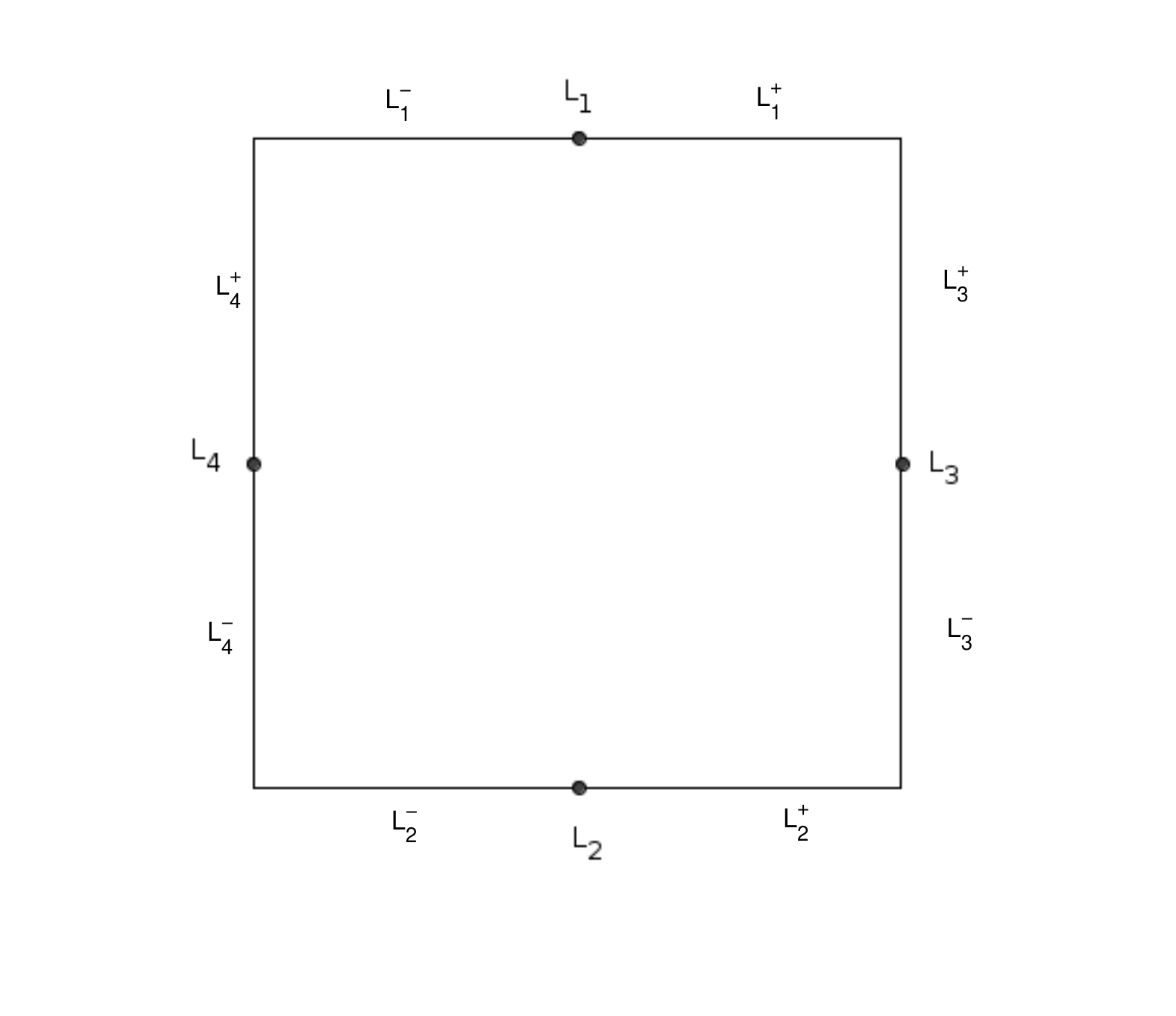}
    \caption{Geometrical setting: Boundaries and subsets of the boundaries of the square $S$.}
    \label{Square}
  \end{figure}
  
  At this moment, we can state the main result of this section; the existence of the chaotic saddle for the Lozi 
  autonomous map.

  \begin{theorem}\label{autonomous_main_theorem}
  For $a>4$  assumptions 1 and 2 hold for the  autonomous  Lozi map, and therefore  it possesses a chaotic saddle inside the square $S$.
  \end{theorem}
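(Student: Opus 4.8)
The plan is to exploit the piecewise-linear structure of $L$: on each half-square $S^{+}=S\cap\{x\ge 0\}$ and $S^{-}=S\cap\{x\le 0\}$ the map is affine, with constant Jacobian $DL=\left(\begin{smallmatrix}\mp a & 1\\ -1 & 0\end{smallmatrix}\right)$ (upper sign on $S^{+}$) and characteristic roots of magnitude $|\lambda_{\pm}|=\tfrac12\bigl(a\pm\sqrt{a^{2}-4}\bigr)$; for $a>4$ one has $|\lambda_{+}|>1$, $|\lambda_{-}|=1/|\lambda_{+}|<1$, and the expanding eigendirection has slope $s_{u}=\tfrac12\bigl(a-\sqrt{a^{2}-4}\bigr)$ with $1/a<s_{u}<1$. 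I would take the vertical strips $V_{1},V_{2}$ to be the two connected components of $S\cap L^{-1}(S)$ and the horizontal strips $H_{1},H_{2}$ to be the components of $S\cap L(S)$. Since $L$ is a global diffeomorphism, $L\bigl(S\cap L^{-1}(S)\bigr)=L(S)\cap S$, and checking which half each component lives in gives $L(V_{i})=H_{i}$. A short computation shows $S\cap L^{-1}(S)$ is the region $a|x|-1-R\le y\le a|x|-1+R$ inside $S$, bounded by lines of slope $\pm a$ (equivalently $\mu_{v}$-vertical curves of slope $1/a$), and dually $S\cap L(S)$ is bounded by $\mu_{h}$-horizontal curves of slope $1/a$.

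The first task is to pin down $R$. For the two components of $S\cap L^{-1}(S)$ to be genuine vertical strips spanning $y\in[-R,R]$ and lying in $\{x>0\}$, $\{x<0\}$ respectively (and similarly for the horizontal strips), the bounding curves must enter and leave through the correct edges of $S$; writing these conditions out yields $R\ge 1/(a-2)$ (the strips reach fully across) and $R\le 1/2$ (the two vertical strips, resp. the upper and lower horizontal strips, stay disjoint). The interval $\bigl(1/(a-2),\,1/2\bigr)$ is nonempty precisely when $a>4$, which is the origin of the hypothesis; I would fix any such $R$. Taking $\mu_{v}=\mu_{h}=\mu:=s_{u}$, so that the strip boundaries (slope $1/a<\mu$) are admissible, Assumption~1 is then immediate: $L$ restricted to each $V_{i}$ is affine and injective, hence a homeomorphism onto $H_{i}$, and $\mu_{v}\mu_{h}=s_{u}^{2}<1$. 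The boundary correspondence is read off from $L(x,y)=(1+y-a|x|,-x)$: the vertical sides of $V_{i}$ lie on $L^{-1}\{x=\pm R\}$ and so map to the vertical sides $\{x=\pm R\}$ of $H_{i}$, while the horizontal sides $\{y=\pm R\}\cap V_{i}$ map onto the slope-$1/a$ horizontal boundaries of $H_{i}$.

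The substantive step is Assumption~2, which piecewise-linearity again makes explicit. Given a $\mu_{h}$-horizontal strip $H\subset H_{1}\cup H_{2}$ with boundary graphs $h_{1},h_{2}$, I would measure the vertical width of $L(H)\cap H_{i}$ along a vertical line $x=X_{0}$: its preimage under the relevant affine branch is a line of slope $a$, and tracking where this line meets $h_{1}$ and $h_{2}$ gives the exact relation $|x_{2}-x_{1}|=a^{-1}|h_{2}(x_{2})-h_{1}(x_{1})|$, whence, using the Lipschitz bound $|h_{j}(x)-h_{j}(x')|\le\mu_{h}|x-x'|$,
$$d(\tilde{H}_{i})\le\frac{d(H)}{a-\mu_{h}}=\frac{d(H)}{|\lambda_{+}|}=:\nu_{h}\,d(H),\qquad \nu_{h}=|\lambda_{-}|<1.$$
The same argument applied to $L^{-1}(x,y)=(-y,\,x+a|y|-1)$ gives $d(\tilde{V}_{i})\le\nu_{v}\,d(V)$ with $\nu_{v}=|\lambda_{-}|<1$. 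It remains to confirm that $\tilde{H}_{i}$ is again a $\mu_{h}$-horizontal strip, i.e. that the cone $\{|\mathrm{slope}|\le\mu\}$ is forward invariant: the induced action on slopes of an affine branch is $m\mapsto 1/(a-m)$ (up to a sign on $S^{-}$), whose symmetric invariant interval is exactly $[-\mu,\mu]$ for any $\mu\in\bigl[s_{u},\tfrac12(a+\sqrt{a^{2}-4})\bigr]$, so $\mu=s_{u}$ works. With Assumptions~1 and 2 verified, the autonomous Conley--Moser theorem stated above furnishes the invariant Cantor set $\Lambda\subset S$ on which $L$ is topologically conjugate to the full $2$-shift, i.e. the chaotic saddle. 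I expect the genuine difficulty to be bookkeeping rather than analysis: the \emph{simultaneous} satisfaction of all constraints on $R$ and $\mu$ (full crossing, disjointness, admissible strip slopes, cone invariance, and $\mu_{v}\mu_{h}<1$) is what forces the threshold, all of the inequalities closing up exactly at $a=4$.
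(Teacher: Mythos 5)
Your proposal is correct and follows the same overall construction as the paper: the same strips $V_i,H_i$ taken as the two components of $S\cap L^{-1}(S)$ and $S\cap L(S)$, the same two geometric constraints forcing $1/(a-2)\le R\le 1/2$ and hence the threshold $a>4$, and the same use of piecewise linearity to verify Assumption 2 explicitly. Two points where you diverge are worth recording. First, the paper fixes $\mu_h=\mu_v=1/a$ (the exact slope of the strip boundaries) and verifies the width contraction only for a sub-strip $V$ whose boundaries are straight lines of slope $a$, obtaining $\nu_v=a/(a^2-1)$ by solving for the four intersection vertices and mapping them back by $L^{-1}$; your estimate $d(\tilde H_i)\le d(H)/(a-\mu_h)$ recovers this as the special case $\mu_h=1/a$ but holds for arbitrary Lipschitz boundary graphs, which is what Assumption 2 actually demands. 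Second, your choice $\mu_h=\mu_v=s_u=\tfrac12\bigl(a-\sqrt{a^2-4}\bigr)$ is the one that makes the slope cone genuinely forward invariant (the induced map $m\mapsto 1/(a-m)$ sends $[-s_u,s_u]$ into itself), and this is needed for the final step that $\tilde H_i$ and $\tilde V_i$ are again admissible strips: with the paper's $\mu=1/a$, a boundary curve of slope $1/a$ maps to one of slope $a/(a^2-1)>1/a$, so the class of $(1/a)$-strips is not literally preserved --- a point the paper passes over with a brief appeal to Assumption 1. (The paper itself adopts your value $\tfrac12\bigl(a-\sqrt{a^2-4}\bigr)$ when it treats the nonautonomous case via the cone condition.) In short, your version buys a tighter and self-consistent verification of Assumption 2 at essentially no extra cost, while the paper's buys completely explicit coordinates for the contracted strip; both reach the same conclusion for $a>4$.
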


The proof of this theorem is carried out in the following subsections.

\subsection{Verification of Assumption 1 of the Autonomous Conley-Moser Conditions}
\label{sec:CM1}

  In this section we will show how the map $L$ acts on the square $S$. In 
  order to do that, we want to see how the boundaries of the square $S$ are transformed. This action can be seen in 
  figure \ref{Lozi_L}.
  \par
  First, we begin with the map $L$ acting on the upper boundary, $L_1$\footnote{For notational convenience henceforth we will denote the image of  a point $(x, y)$ under $L$ by $(X, Y)$. }.
  
  $$L(L_1) = L(x,R) = (1+R-a|x|,-x)$$

  $$\left \{ \begin{array}{ccc}
      X & = & 1+R-a|x| \\
      Y & = & -x \\
     \end{array}\right.
  $$
  \begin{equation}
  |y| = \frac{1+R-x}{a} \Rightarrow \left \{\begin{array}{cc}
                                              y = \frac{1+R}{a} - \frac{x}{a}, & x<0 \quad (L^-_1) \\
                                                                               &                   \\
                                              y = \frac{x}{a} - \frac{1+R}{a}, & x\geq0 \quad (L^+_1) \\
                                              \end{array} \right. 
  \end{equation}
  We do the same but for the lower boundary, $L_2$.
  \par
  \vspace{3mm}
  $$L(L_2) = L(x,-R) = (1-R-a|x|,-x)$$
  $$\left \{ \begin{array}{ccc}
      X & = & 1-R-a|x| \\
      Y & = & -x \\
     \end{array}\right.
  $$
  \begin{equation}
  |y| = \frac{1-R-x}{a} \Rightarrow \left \{\begin{array}{cc}
                                              y = \frac{1-R}{a} - \frac{x}{a}, & x<0 \quad (L^-_2) \\
                                                                               &                   \\
                                              y = \frac{x}{a} - \frac{1-R}{a}, & x\geq0 \quad (L^+_2) \\
                                              \end{array} \right.
  \end{equation}
  Now we see the map $L$ acting on the right side, $L_3$.
  \par
  \vspace{3mm}
  $$L(L_3) = L(R,y) = (1+y-aR,-R)$$
  \begin{equation}
  \left \{ \begin{array}{ccc}
      X & = & 1+y-aR \\
      Y & = & -R \\
      \end{array}\right. 
  \end{equation}  
  and the left side, $L_4$.
  \par
  \vspace{3mm}
  $$L(L_4) = L(-R,y) = (1+y-aR,R)$$
  \begin{equation}
  \left \{ \begin{array}{ccc}
      X & = & 1+y-aR \\
      Y & = & R \\
      \end{array}\right.
  \end{equation}
  From these computations we can set that the horizontal boundaries $L_1$ and $L_2$ are mapped by $L$ to affine lines 
  with slopes  $|m|=1/a$ passing through the points $q = (1+R,0)$ and $p = (1-R,0)$ respectively that must be outside
the domain $S$ as we will see later. Moreover, vertical lines are mapped to horizontal lines. From figure   
  \ref{Lozi_L} we can observe how the strips are formed. In this case the set $L(S) \cap S$ is the union of the two   
  horizontal strips $H_1$ and $H_2$. 

  \begin{figure}[ht]
    \centering
    \includegraphics[scale=0.75]{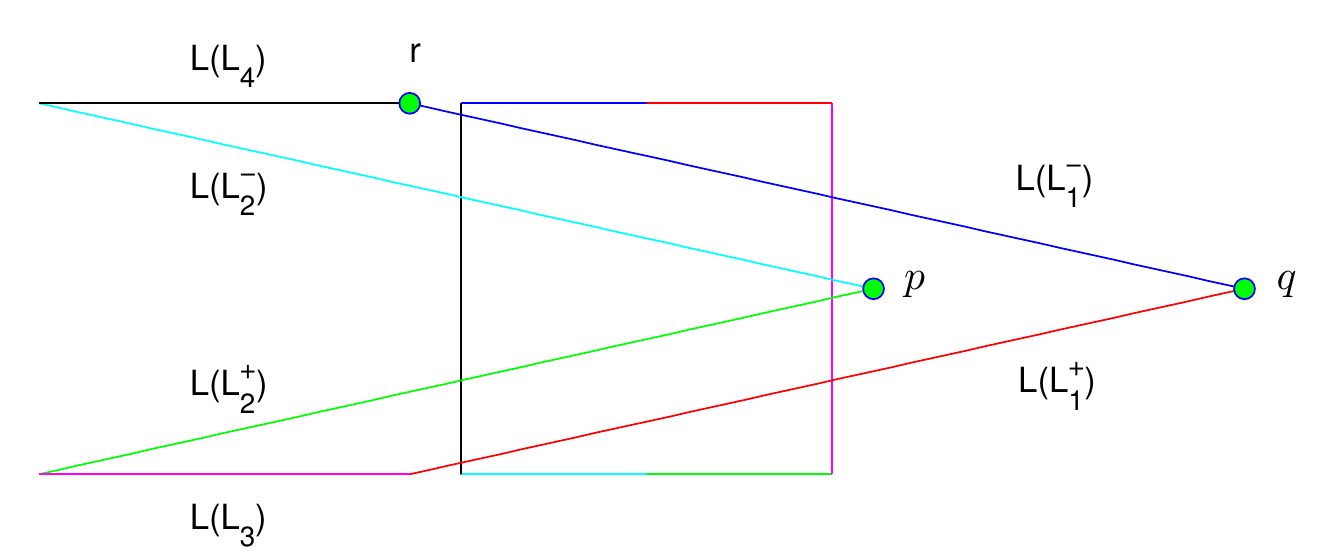}
    \caption{$L$ mapping $S$.}
    \label{Lozi_L}
  \end{figure}

  \par
  \noindent
  On the other hand, we want to show $L^{-1}$ acting on $S$, as is shown in figure \ref{Lozi_Lmen}. As we did 
  before, we start with $L^{-1}$ acting on the   upper side $L_1$.
  $$L^{-1}(L_1) = L^{-1}(x,R) = (-R,x+aR-1)$$
  \begin{equation}
  \left \{ \begin{array}{ccc}
      X & = & -R \\
      Y & = & x+aR-1 \\
      \end{array}\right. 
  \end{equation}
  and $L^{-1}$ acting on the lower side, $L_2$.
  \par
  \vspace{3mm}
  $$L^{-1}(L_2) = L^{-1}(x,-R) = (R,x+aR-1)$$
  \begin{equation}
  \left \{ \begin{array}{ccc}
      X & = & R \\
      Y & = & x+aR-1 \\
      \end{array}\right.
  \end{equation}
  Now we focus on $L^{-1}$ acting on the right side, $L_3$.
  \par
  \vspace{3mm}
  $$L^{-1}(L_3) = L^{-1}(R,y) = (-y,R+a|y|-1)$$
  $$\left \{ \begin{array}{ccc}
      X & = & -y \\
      Y & = & R+a|y|-1 \\
      \end{array}\right.
  $$
  \begin{equation}
  y = R+a|x|-1 \Rightarrow \left \{ \begin{array}{cc}
                                            y = R+ax-1, & y<0 \quad (L^-_3) \\
                                                        &                      \\
                                            y = R-ax-1, & y\geq0 \quad (L^+_3)    \\
                                      \end{array} \right.
  \end{equation}
  and $L^{-1}$ acting on the left side, $L_4$.
  \par
  \vspace{3mm}
  $$L^{-1}(L_4) = L^{-1}(-R,y) = (-y,-R+a|y|-1)$$
  $$\left \{ \begin{array}{ccc}
      X & = & -y \\
      Y & = & -R+a|y|-1 \\
      \end{array}\right.
  $$
  \begin{equation}
  y = a|x|-(1+R) \Rightarrow \left \{ \begin{array}{cc}
                                            y = ax-(1+R),  & y<0 \quad (L^-_4) \\
                                                           &                      \\
                                            y = -ax-(1+R), & y\geq0 \quad (L^+_4)    \\
                                        \end{array} \right.
  \end{equation}
  From this we can observe that the vertical boundaries $L_3$ and $L_4$ are mapped by $L^{-1}$ to affine lines with 
  slopes  $|m|=a$ passing through the points $\tilde{p} = (0,R-1)$ and $\tilde{q} = (0,-(1+R))$ respectively, 
  which are outside the square, and horizontal lines are mapped to vertical lines. As we show in figure 
  \ref{Lozi_Lmen}, the union of the vertical strips $V_1$ and $V_2$ is the set $L^{-1}(S) \cap S$.

  \begin{figure}[ht]
    \centering
    \includegraphics[scale=0.75]{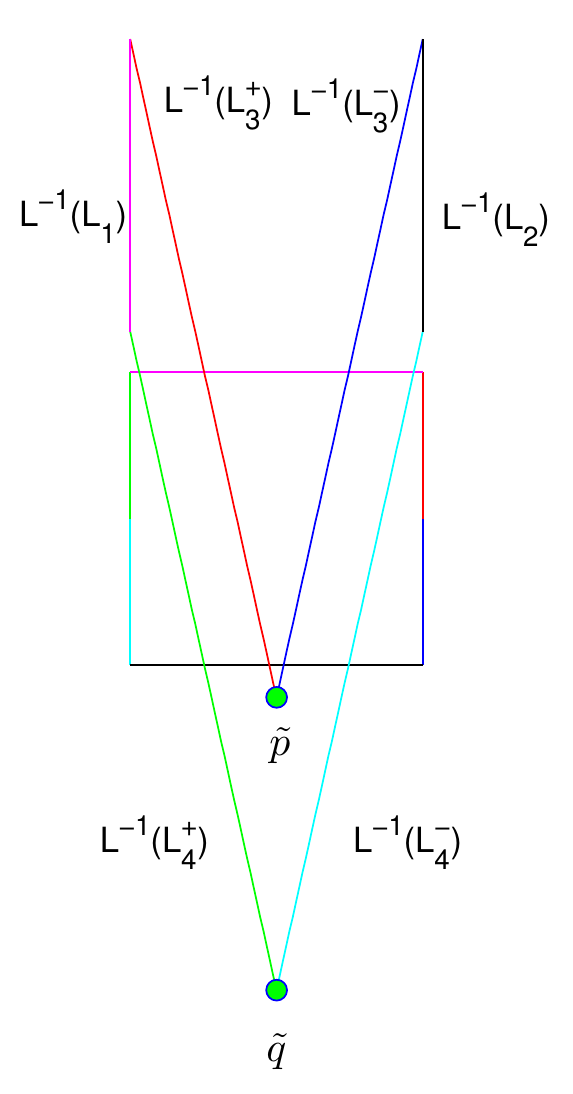}
    \caption{$L^{-1}$ mapping $S$.}
    \label{Lozi_Lmen}
  \end{figure}

  \par
  \noindent
  Now that we know how the square $S$ is mapped under $L$ and $L^{-1}$ we have to set some conditions on the size of 
  $S$, that is the value $R$. We have to keep in mind two conditions.
  \par
  \noindent
  The first one is that we need the two horizontal strips and the two vertical strips to  cross each other. (We 
  illustrate this idea in figure \ref{Lozi_L}). For that reason, we need the point $p$ to have first coordinate 
  greater than $R$ and the point $\tilde{p}$ to have second coordinate lower than $-R$. Because of the symmetry of the 
  problem, we reflect only the condition for $p$ in the next inequality

  \begin{equation}\label{first_condition}
  1-R \geq R \Leftrightarrow \frac{1}{2} \geq R
  \end{equation}
  \par
  \noindent
  On the other hand, the second issue is that we need the area of intersection between the strips, represented for instance in figure \ref{Lozi_cross}, to be inside the domain $S$. This is achieved when the point $r = L(L_1^{-}) \cap \lbrace y = R \rbrace$ has first coordinate lower than $-R$, as we can see in figure \ref{Lozi_L}.
  \begin{equation}
  r = L(L_1^{-}) \cap \lbrace y = R \rbrace = (1 + R - a|x|, -x)|_{-x=R} = (1 + R - aR, R)
  \end{equation}
  So this last assumption is translated into
  
  \begin{equation}\label{second_condition}
  1 + R -aR < -R
  \end{equation}
  
  \noindent
  and this holds when
  
  \begin{equation}
  R \geq \frac{1}{a-2}
  \end{equation}

  \noindent 
  Combining these two conditions we have,
  
  \begin{equation}
  \frac{1}{a-2} \leq R \leq \frac{1}{2}
  \end{equation}
  
  \noindent
  This last inequality only makes sense when $a \geq 4$.

  \par
  \noindent
  Once we have set these conditions, we can take
  
  \begin{equation}
  R(a) = \frac{1}{2}\left (\frac{1}{a-2} + \frac{1}{2}\right ) = \frac{a}{4(a-2)}, \quad \text{for every } a \geq 4,
  \end{equation}
  
  \noindent
  which is the midpoint of the interval
  
  $$\left [\frac{1}{a-2}, \quad \frac{1}{2} \right ]$$
  \par
  \noindent
  Now we start proving the first assumption, $A_1$, for the autonomous Conley-Moser conditions. For this purpose we need to define the vertical ($V_1$ and $V_2$) and 
the horizontal ($H_1$ and $H_2$) strips. To obtain the horizontal strips we will map forward by $L$ the square $S$ and 
make the intersection with the same square. To get the vertical strips we will map backward by $L^{-1}$ the square $S$ 
and intersect it with $S$, that is

  \begin{equation}
  \begin{array}{ccc}
  H_1 \cup H_2 & := & L(S) \cap S       \\
               &    &                   \\
  V_1 \cup V_2 & := & L^{-1}(S) \cap S  \\
  \end{array}
  \end{equation}
  
  \noindent
where $H_1$ is the upper half part of $L(S) \cap S$ and $H_2$ is the lower half part of $L(S) \cap S$, $V_1$ is the 
left half part of $L^{-1}(S) \cap S$ and $V_2$ is the right half part of $L^{-1}(S) \cap S$. We will use the following 
notation

  \begin{equation}
  \begin{array}{ccc}
  H_1 & := & (L(S) \cap S)^{+}\text{, where } y > 0, \\
      &    &                   \\
  H_2 & := & (L(S) \cap S)^{-}\text{, where } y < 0, \\
      &    &                   \\
  V_1 & := & (L^{-1}(S) \cap S)^{-}\text{, where } x < 0, \\
      &    &                   \\
  V_2 & := & (L^{-1}(S) \cap S)^{+}\text{, where } x > 0, \\
  \end{array}\label{eq30}
  \end{equation}
  These strips can be seen in figure \ref{Lozi_cross}.
  
  \begin{figure}[ht]
    \centering
    \includegraphics[scale=0.6]{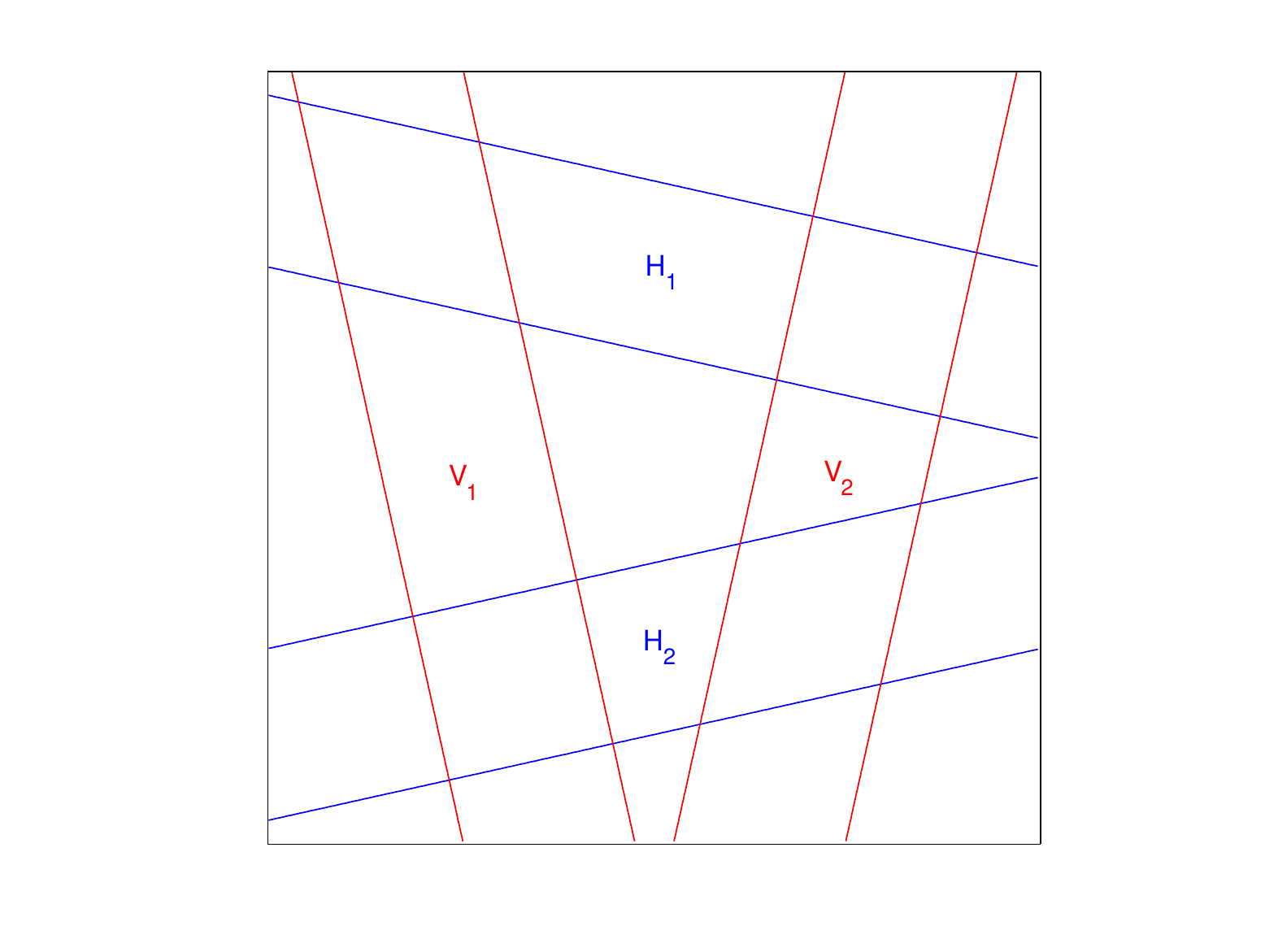}
    \caption{Vertical and horizontal strips. \textcolor{blue}{$H_1 \cup H_2 := L(S) \cap S$} and \textcolor{red}{$V_1 
\cup V_2 := L^{-1}(S) \cap S$}.}
    \label{Lozi_cross}
  \end{figure}

 It is easy to see from expressions in (\ref{eq30}) that $L$ maps $V_i$ homeomorphically onto $H_i$, ($L(V_i) = H_i$) for $i=1,2$. Furthermore, the horizontal boundaries of $V_i$ map to the horizontal boundaries of $H_i$ and the vertical boundaries of $V_i$ map to the vertical boundaries of $H_i$. Moreover, $V_i$ are $\mu_v$-vertical strips because its vertical boundaries are $\mu_v$-vertical curves where $|\mu_v| = 1/a$. And $H_i$ are $\mu_h$-horizontal strips because its horizontal boundaries are $\mu_h$-horizontal curves where $|\mu_h| = 1/a$.

  \begin{remark}
  We have to take care about $|\mu_v| = 1/a$. The slope of the vertical lines is $a$ but seen as horizontal curves. We 
must see this slope rate as if the vertical lines were $\mu_v$-vertical curves and so $|\mu_v| = 1/a$.
  \end{remark}  
  
  We have to keep in mind that the product of the slopes of the strips has to be less than 1 as it is required in the 
Conley-Moser conditions. In our case $|\mu_h| = |\mu_v| = 1/a$ Therefore

  \begin{equation}
  |\mu_h \cdot \mu_v| = |\frac{1}{a} \cdot \frac{1}{a}| = \frac{1}{a^2}, 
  \end{equation}

  \noindent
  and the condition of the slopes of $A_1$ is satisfied.

\subsection{Verification of Assumption 2 of the Autonomous Conley-Moser Conditions.}
\label{sec:CM2}

  The second assumption describes  how the map $L$ bends and makes thinner strips on each iteration. Moreover we obtain this rate which in our assumption is called $\nu_v$. Lets take $V$ a $\mu_v$-vertical strip contained, for instance, in $V_2$ ($V \subset V_2$). As we can see in figure \ref{Lozi_V_2}, we name the vertices of $V_2$ as $v_1=(v_{1_x},R)$, $v_2=(v_{2_x},R)$, $v_3=(v_{3_x},-R)$ and $v_4=(v_{4_x},-R)$ and the vertices which delimit $V$ are $q_1=(q_{1_x},R)$, $q_2=(q_{2_x},R)$, $q_3=(q_{3_x},-R)$ and $q_4=(q_{4_x},-R)$. We assume that
  \begin{equation}
      \begin{array}{ccccccc}
      v_{1_x} & < & q_{1_x} & < & q_{2_x} & < & v_{2_x} \\
      v_{3_x} & < & q_{3_x} & < & q_{4_x} & < & v_{4_x} \\
      \end{array}
  \end{equation}
  and
  $$d(V) = |q_{2_x}-q_{1_x}| = |q_{4_x}-q_{3_x}|$$
  The vertical boundaries of $V$ are the straight lines with slope rate $a$ (because $V$ is $\mu_v$-vertical strip) 
and pass through the points $q_1$ and $q_2$ respectively:
  \begin{equation}
    \begin{array}{l}
        y_{q_1} = ax+R-aq_{1_x} \\
        y_{q_2} = ax+R-aq_{2_x} \\
    \end{array}
  \end{equation}

  \begin{figure}[ht]
    \centering
    \includegraphics[scale=0.6]{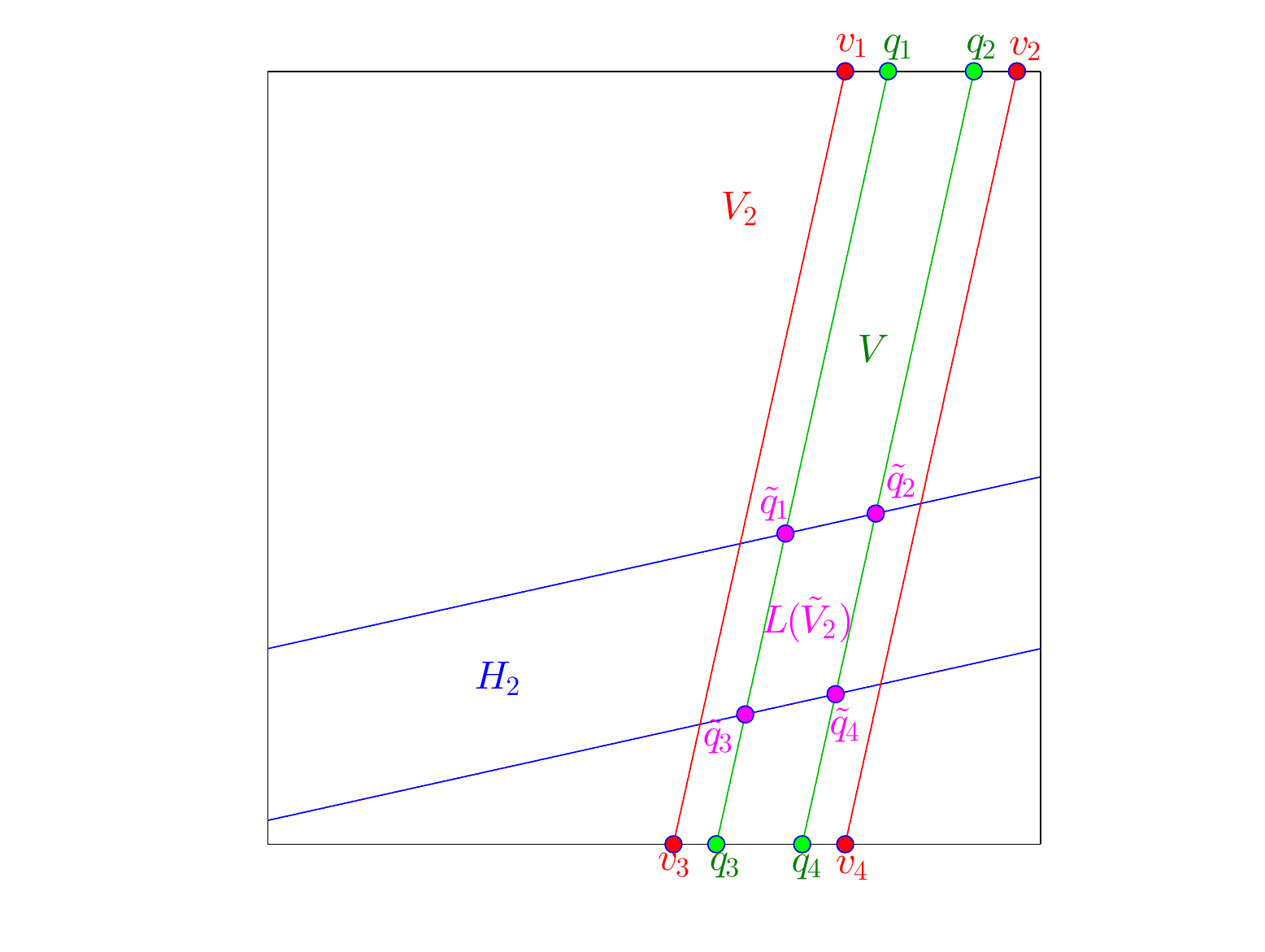}
    \caption{$V \subset V_2$ strip and $L(\tilde{V}_2) \equiv V \cap H_2$.}
    \label{Lozi_V_2}
  \end{figure}

  \noindent
  We need to determine $\tilde{V}_2 \equiv L^{-1}(V) \cap V_2$. Since we do not know how $L^{-1}$ acts on $V$, first we obtain 
$L(\tilde{V}_2) \equiv V \cap L(V_2) = V \cap H_2$ and after that, we will recover $\tilde{V}_2$ by iterating 
$L(\tilde{V}_2)$ backward. $V$ cuts $H_2$ at four points: $\tilde{q}_1$, $\tilde{q}_2$, $\tilde{q}_3$ and $\tilde{q}_4$ 
and we can describe them as follows:

  \begin{equation}
  \tilde{q}_1 : y_{q_1} \cap \left \{ y = \frac{x-(1-R)}{a} \right \} \Rightarrow \left \{
                                                                                   \begin{array}{l}
                                                                                     y = \frac{x-(1-R)}{a} \\
                                                                                     \\
                                                                                     y = ax +R - aq_{1_x}  \\
                                                                                   \end{array}
                                                                                   \right .
  \end{equation}
  
  \noindent
  Solving these two equations gives the coordinates of $\tilde{q}_1$:

  \begin{equation}
  \begin{array}{l}
   \tilde{q}_{1_x} = \frac{a^2q_{1_x}-aR-1+R}{a^2-1} \\
                                                             \\
   \tilde{q}_{1_y} = \frac{aq_{1_x}+aR-a-R}{a^2-1}
  \end{array}
  \end{equation}
  
  \noindent
  We continue the same procedure for the rest of the points

  \begin{equation}
   \tilde{q}_2 : y_{q_2} \cap \left \{ y = \frac{x-(1-R)}{a} \right \} \Rightarrow \left \{
                                                                                   \begin{array}{l}
                                                                                     y = \frac{x-(1-R)}{a} \\
                                                                                     \\
                                                                                     y = ax +R - aq_{2_x}  \\
                                                                                   \end{array}
                                                                                   \right .
  \end{equation}
  
  \noindent
  and the coordinates for $\tilde{q}_2$ are
  
  \begin{equation}
  \begin{array}{l}
   \tilde{q}_{2_x} = \frac{a^2q_{2_x}-aR-1+R}{a^2-1} \\
                                                             \\
   \tilde{q}_{2_y} = \frac{aq_{2_x}+aR-a-R}{a^2-1}
  \end{array}
  \end{equation}
  
  \noindent
  In the case of $\tilde{q}_3$ we solve
  
  \begin{equation}
   \tilde{q}_3 : y_{q_1} \cap \left \{ y = \frac{x-(1+R)}{a} \right \} \Rightarrow \left \{
                                                                                   \begin{array}{l}
                                                                                     y = \frac{x-(1+R)}{a} \\
                                                                                     \\
                                                                                     y = ax +R - aq_{1_x}  \\
                                                                                   \end{array}
                                                                                   \right .
  \end{equation}
  
  \noindent
  with coordinates
  
  \begin{equation}
  \begin{array}{l}
   \tilde{q}_{3_x} = \frac{a^2q_{1_x}-aR-1-R}{a^2-1} \\
                                                             \\
   \tilde{q}_{3_y} = \frac{aq_{1_x}-aR-a-R}{a^2-1}
  \end{array}
  \end{equation}
  
  \noindent
  Finally, to obtain $\tilde{q}_4$ we solve the system

  \begin{equation}
   \tilde{q}_4 : y_{q_2} \cap \left \{ y = \frac{x-(1+R)}{a} \right \} \Rightarrow \left \{
                                                                                   \begin{array}{l}
                                                                                     y = \frac{x-(1+R)}{a} \\
                                                                                     \\
                                                                                     y = ax +R - aq_{2_x}  \\
                                                                                   \end{array}
                                                                                   \right .
  \end{equation}
  
  \noindent
  therefore its coordinates are
  
  \begin{equation}
  \begin{array}{l}
   \tilde{q}_{4_x} = \frac{a^2q_{2_x}-aR-1-R}{a^2-1} \\
                                                             \\
   \tilde{q}_{4_y} = \frac{aq_{2_x}-aR-a-R}{a^2-1}
  \end{array}
  \end{equation}

  At this moment, we have given a precise description of  $L(\tilde{V}_2)$; it is a strip delimited by the four vertices from above and the straight lines $y_{q_1}$, $y_{q_2}$, $y = \frac{x-(1-R)}{a}$ and $y = \frac{x-(1+R)}{a}$. In order to 
recover $\tilde{V}_2$ we should map it backward by $L^{-1}$. By continuity, and since $L$ is orientation preserving, 
the points which are leading to $y=R$ are $L^{-1}(\tilde{q}_4)$ and $L^{-1}(\tilde{q}_3)$ and the points that are mapped 
to $y=-R$ are $L^{-1}(\tilde{q}_2)$ and $L^{-1}(\tilde{q}_1)$, and therefore

  \begin{equation}\label{dist_tilde_V_2}
  d(\tilde{V}_2) = |L^{-1}(\tilde{q}_{4_x}) - L^{-1}(\tilde{q}_{3_x})| = |L^{-1}(\tilde{q}_{2_x}) - L^{-1}(\tilde{q}_{1_x})|
  \end{equation}
  
  \noindent
  For instance we compute $L^{-1}(\tilde{q}_{4_x})$ and $L^{-1}(\tilde{q}_{3_x})$

  \begin{equation}
  \begin{array}{ccl}
  L^{-1}(\tilde{q}_{4_x}) & = & L^{-1}(\frac{a^2\tilde{q}_{2_x}-aR-1-R}{a^2-1}, \frac{a\tilde{q}_{2_x}-aR-a-R}{a^2-1}) \\
                          &   &          \\
                          & = & (\frac{aR+a+R-a\tilde{q}_{2_x}}{a^2-1}, \frac{a^2\tilde{q}_{2_x}-aR-1-R}{a^2-1} + \frac{a|a\tilde{q}_{2_x}-aR-a-R|}{a^2-1}-1) \\
                          &   &          \\
                          & \stackrel{(*)}{=} & (\frac{aR+a+R-a\tilde{q}_{2_x}}{a^2-1}, \frac{a^2q_{2_x} - aR - 1 - R - a^2q_{2_x} + a^2R + a^2 + aR - a^2 + 1}{a^2-1}) \\
                          &   &          \\
                          & = & (\frac{aR+a+R-a\tilde{q}_{2_x}}{a^2-1},R) \\
  \end{array}
  \end{equation}
  
  \noindent
  In $(*)$ we have use the fact that $a\tilde{q}_{2_x}-aR-a-R < 0$ as $\tilde{q}_{2_x} < R$ and $a>0$. The computations 
for $L^{-1}(\tilde{q}_2)$ are analogous, so

  \begin{equation}
  L^{-1}(\tilde{q}_{3_x}) = (\frac{a+aR+R-aq_{1_x}}{a^2-1},R)
  \end{equation}
  Using \eqref{dist_tilde_V_2}, we can compute $d(\tilde{V}_2)$
  \begin{equation}
  \begin{array}{ccl}
  d(\tilde{V}_2) & = & \displaystyle{\frac{|a + aR + R - aq_{1_x} - a - aR - R + aq_{2_x}|}{a^2-1}} \\
                 &   &  \\
                 & = & \displaystyle{\frac{a}{a^2-1}\cdot |q_{2_x}-q_{1_x}| = \nu_v \cdot d(V)}
  \end{array}
  \end{equation}
  
  \noindent
 Therefore we only need the rate $\nu_v$ to be less than 1 and that holds when $a > \frac{1+\sqrt{5}}{2}$:
 
  \begin{equation}
  \frac{a}{a^2-1} < 1 \Leftrightarrow a^2 - a - 1 > 0 \Leftrightarrow a > \frac{1+\sqrt{5}}{2} = \Phi
  \end{equation}

  The last issue of assumption 2 that remains to be proved is that $\tilde{V}_i$ is a $\mu_v$-vertical strip. In order 
to prove this we realize that $L(\tilde{V}_i) = V \cap L(V_i) = V \cap H_i$ is a $\mu_h$-horizontal strip since its 
boundaries are $\mu_h$-horizontal curves. As we know by assumption 1, by $F^{-1}$ horizontal boundaries of horizontal 
strips map to horizontal boundaries of vertical strips and vertical boundaries of vertical strips map to vertical 
boundaries of horizontal strips and it is clear that this vertical boundaries of $\tilde{V}_i$ are $\mu_v$-horizontal 
curves. Therefore the second assumption is already proven and hence, using Theorem \ref{autonomous_main_theorem}, the 
existence of the chaotic saddle inside $S$ is proven.

\section{Non-autonomous Lozi map}
\label{sec:Lozinaut}

    In this section we prove that  Assumptions 1 and 3 of  the  nonautonomous Conley-Moser  conditions hold for the nonautonomous Lozi map. 
We begin by developing the set-up  for the problem. We define
  the maps $L_n$ and the domains $S_n$ as follows:

  \begin{equation}
  L_n(x,y) = (1 + y -a(n)|x|, \quad -x)
  \end{equation}
  
  \noindent
  and

  \begin{equation}
  L^{-1}_n(x,y) = (-y, \quad x + a(n)|y| - 1)
  \end{equation}

  \noindent
  where $a(n) = a + \epsilon(1 + \cos(n))$, $a > 4$. The domains can be set as $S_n = [-R(n),R(n)] \times 
  [-R(n),R(n)]$ where

  \begin{equation}
  R(n) = \frac{a(n)}{4(a(n)-2)}
  \end{equation}
  
  \noindent
  Although the domain may change with each iteration,  in this example we can consider a fixed domain in order to 
simplify the proof of Theorem \ref{non_autonomous_main_theorem}. 
  Therefore we define the domain as:

  \begin{equation}
  S = \sup_{n \in \mathbb{Z}}([-R(n), R(n)] \times [-R(n), R(n)]) = [-R, R] \times [-R, R]
  \end{equation}
  
  \noindent
  where

  \begin{equation}
  R = \sup_{n \in \mathbb{Z}} \frac{a(n)}{4(a(n)-2)}.
  \end{equation}

  \noindent
  To find this value we examine the monotonicity of the function
  
  \begin{equation}
  f(a(n)) = \frac{a(n)}{4(a(n)-2)}
  \end{equation}
  
  \noindent
  for $a(n) \neq 2$. This function always decreases since its derivate is negative when $a(n) \neq 2$. Therefore the  supreme of this value is determined from the following relations:

  \begin{equation}
  \inf_{n \in \mathbb{Z}}a(n) = a + \epsilon(1-1) = a.  
  \end{equation}
  
  \noindent
  and

  \begin{equation}
  R = \sup_{n \in \mathbb{Z}} R(n) = \frac{\inf_{n \in \mathbb{Z}}a(n)}{4(\inf_{n \in \mathbb{Z}}a(n)-2)} = 
  \frac{a}{4(a-2)}
  \end{equation}

  \noindent
  As we did in the autonomous case, we must check conditions similar to inequalities \eqref{first_condition} and 
  \eqref{second_condition} in order to obtain strips of four vertices totally included in the maximal domain $S$.
  \par
  \noindent
  By the symmetry of the problem, as it is shown in figure \ref{Lozi_cross}, the first condition, 
  \eqref{first_condition}, is satisfied since the point $p$ does not depend on the iteration $n$. So this condition holds when

  \begin{equation}
  R < \frac{1}{2}.
  \end{equation}
  
  \noindent
  The second condition is analogous to \eqref{second_condition}, and is also holds. We want the point $r$ to cut the 
  horizontal line $y=R$ out from the domain $S$ and it must have first coordinate lower than $-R$.

  \begin{equation}
  r = L_n(L_1^{-}) \cap \lbrace y = R \rbrace = (1 + R - a(n)|x|, -x)|_{-x=R} = (1 + R - a(n)R, R)
  \end{equation}
  
  \noindent
  therefore we must determine if the following inequality holds

  \begin{equation}
  1 + R - a(n)R < -R \Leftrightarrow 1 < (a(n)-2)R \quad \text{for all } n \in \mathbb{Z}
  \end{equation}
  
  \noindent
  and it does hold  since

  \begin{equation}
  (a(n)-2)R > (a -2)\frac{a}{4(a-2)} = \frac{a}{4} > 1
  \end{equation}
  
  \noindent
  when $a > 4$ for all $n \in \mathbb{Z}$.
  \par
  \noindent
  Henceforth, we set $S = S_n$ for all $n \in \mathbb{Z}$ and we can state the main nonautonomous theorem.

  \begin{theorem}\label{non_autonomous_main_theorem}
  For $a(n) = a + \epsilon(1 + \cos(n))$ where $a>4$ and $\epsilon$ small, Assumptions 1 and 3  of the nonautonomous Conley-Moser conditions hold for the nonautonomous Lozi map $L_n$. and therefore  it possesses a chaotic saddle, $\Lambda_n$, inside the square $S$. In other words, $\left\lbrace 
  \Lambda_n, L_n \right\rbrace ^{+\infty}_{n=-\infty}$ is a infinite sequence of chaotic invariant sets where 
  $$\Lambda \equiv \cup_{n \in \mathbb{Z}} \Lambda_n,$$ contains an uncountable infinity or orbits, each of them 
  unstable (of saddle type) and the dynamics on the invariant set exhibits sensitive dependence on initial 
  conditions. 
  \end{theorem}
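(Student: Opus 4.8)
The plan is to mirror the autonomous argument of Sections \ref{sec:CM1}--\ref{sec:CM2}, but to carry the index $n$ through every estimate and, crucially, to choose all constants uniformly in $n$. The geometric preamble above already guarantees that the two crossing conditions \eqref{first_condition} and \eqref{second_condition} hold for every $n \in \mathbb{Z}$ on the fixed square $S$; consequently, at each time $n$ the set $L_n^{-1}(S)\cap S$ splits into two $\mu_v$-vertical strips $V_1^n, V_2^n$ and $L_n(S)\cap S$ into two $\mu_h$-horizontal strips, with $L_n(V_i^n)=H_i^{n+1}\subset S$ exactly as in Figure \ref{Lozi_cross}. Since this ``two strips crossing two strips'' picture is reproduced at every step, each intersection $H_i^{n+1}\cap V_j^{n+1}$ is nonempty, so every transition matrix $A^n$ is the $2\times 2$ matrix of ones and $\Sigma^N_{\{A^n\}}$ is the full shift on two symbols.

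First I would verify nonautonomous Assumption 1. The boundaries of $V_i^n$ are affine with slope $a(n)$ and those of $H_i^{n+1}$ with slope $1/a(n)$, so the strips are $\mu_v$-vertical and $\mu_h$-horizontal; because $a(n)\geq a$ for all $n$, one may fix uniform bounds $\mu_h=\mu_v=\mu_0$ with any $\mu_0\in[1/a,1)$, whence $\mu_h\mu_v=\mu_0^2<1$. The piecewise-linear form of $L_n$ then makes the required homeomorphism $V_{ji}^n\to H_{ij}^{n+1}$ and the boundary correspondence $L_n^{-1}(\partial_h H_{ij}^{n+1})\subset\partial_h V_i^n$ completely explicit: this is the computation already performed in Section \ref{sec:CM1}, repeated with $a$ replaced by $a(n)$ and read off on the fixed domain $S$.

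Next I would check Assumption 3, the cone condition. The Jacobians $DL_n$ and $DL_n^{-1}$ are piecewise constant, depending on $n$ only through $a(n)$ and on the point only through $\operatorname{sgn}(x)$, respectively $\operatorname{sgn}(y)$; for instance $DL_n^{-1}=\left(\begin{smallmatrix} 0 & -1\\ 1 & a(n)\operatorname{sgn}(y)\end{smallmatrix}\right)$. Verifying the inclusions $Df_n(S^u_{\mathcal V^n})\subset S^u_{\mathcal H^{n+1}}$ and $Df_n^{-1}(S^s_{\mathcal H^{n+1}})\subset S^s_{\mathcal V^n}$ then reduces to an elementary slope estimate on these $2\times2$ matrices; it forces the common cone aperture to be taken slightly above the strip slope, $\mu_0\geq\tfrac12\!\left(a-\sqrt{a^2-4}\right)$, which is harmless since this value still lies below $1$. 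The decisive observation is that, because $a(n)\geq a>4$, the expansion factor in each accompanying estimate is at least of order $a$ uniformly in $n$, so the associated $\mu$ may be taken of order $1/a$, comfortably satisfying $0<\mu<1-\mu_h\mu_v=1-\mu_0^2$, the worst case throughout being the minimal value $a(n)=a$. With A1 and A3 in hand, the theorem of the previous subsection (namely, A1 and A3 with $0<\mu<1-\mu_h\mu_v$ imply A2) yields A2, and Theorem \ref{main_nonautonomous_theorem} then delivers the sequence $\{\Lambda_n\}$ together with the conjugacy to $\tilde\sigma$, i.e.\ the chaotic saddle.

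The genuinely nonautonomous part, and the main obstacle, is \emph{uniformity}. Unlike the autonomous case, where a single $a$ controls everything, here the strip containments, the cone inclusions, and the hyperbolicity rates $\nu_h,\nu_v,\mu$ must all hold simultaneously for every $n$ with constants independent of $n$. Two features make this possible and must be used explicitly: the uniform lower bound $a(n)\geq a$, which pins the worst case at $a(n)=a$ and thereby supplies $n$-independent hyperbolicity; and the hypothesis that $\epsilon$ is small, which keeps consecutive parameters $a(n)$ and $a(n+1)$ close, so that the horizontal strip $H_i^{n+1}$ (built from $a(n)$) still sits correctly inside the vertical strip $V_j^{n+1}$ (built from $a(n+1)$) -- a compatibility between adjacent time steps that has no analogue in the autonomous problem. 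Checking that this adjacent-step matching survives for small $\epsilon$ is, in my view, the real content beyond a routine re-run of the autonomous estimates.
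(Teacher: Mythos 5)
Your proposal follows essentially the same route as the paper: fix the common square $S$ via $R=\sup_n R(n)=a/(4(a-2))$, verify the two crossing conditions uniformly in $n$ using $a(n)\geq a>4$, take the uniform cone/strip aperture $\mu_h=\mu_v=\tfrac{1}{2}\bigl(a-\sqrt{a^2-4}\bigr)$ (which is exactly the paper's choice, obtained as the left endpoint of $\bigcap_n I_n$ where $I_n=\bigl[\tfrac{a(n)-\sqrt{a(n)^2-4}}{2},\tfrac{a(n)+\sqrt{a(n)^2-4}}{2}\bigr]$), and deduce A2 from A1 and A3 via the quoted theorem. The one point where you diverge is your closing claim that the ``real content'' is an adjacent-step compatibility between $H_i^{n+1}$ (built from $a(n)$) and $V_j^{n+1}$ (built from $a(n+1)$) that requires $\epsilon$ small --- a step you flag but do not carry out. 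In the paper this is not where the work lies, and smallness of $\epsilon$ is not used there (nor, in fact, anywhere: every estimate is monotone in $a(n)$ and the worst case is always $\inf_n a(n)=a$). The intersections $H_i^{n+1}\cap V_j^{n+1}$ are nonempty and of the right type simply because, once the two crossing conditions hold for every $n$ on the fixed square, each $H_i^{n+1}$ is bounded by $\mu_h$-horizontal curves running the full extent $x\in[-R,R]$ while each $V_j^{n+1}$ is bounded by $\mu_v$-vertical curves running the full extent $y\in[-R,R]$; such curves must cross (the paper invokes the intermediate-value/fixed-point argument), giving $A^n_{ij}=1$ for all $i,j,n$ regardless of how far apart $a(n)$ and $a(n+1)$ are. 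So your sketch is correct as far as it goes, but the step you single out as the main obstacle is both easier than you suggest and resolved by a uniformity argument rather than a closeness-of-consecutive-parameters argument; to complete the proof you should replace that paragraph with the full-crossing observation above.
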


  From now to on, we see how these two assumptions are held. First of all, we construct the horizontal and vertical 
  strips as in the autonomous case. Then, we check the third assumption, the \textit{cone condition}, to quantify the
 folding and stretching of the strips. 

\subsection{Verification of Assumption 1 for the nonautonomous Conley-Moser Conditions}
\label{sec:nonautCM1}

We have defined the bi-infinite sequence of maps and domains
  
  \begin{equation}
  \left \{L_n,S\right \}^{+\infty}_{n=-\infty}, \qquad L_n:S \rightarrow \mathbb{R}^2
  \end{equation}
  
  \noindent
  satisfying
  
  $$L_n(S) \cap S \neq \emptyset, \quad \forall n\in \mathbb{Z}.$$
  
  \noindent
Now we define the main geometrical structures of this problem, the horizontal and the vertical strips. In this context we will proceed as in the autonomous case adding the iteration variable

  \begin{equation}
  \begin{array}{ccc}
  H^{n+1}_1 \cup H^{n+1}_2 & := & L_n(S) \cap S       \\
                           &    &                     \\
  V^{n}_1 \cup V^{n}_2     & := & L^{-1}_n(S) \cap S  \\
  \end{array}
  \end{equation}
  
  \noindent
  where $H^{n+1}_1$ is the upper half part of $L_n(S) \cap S$ and $H^{n+1}_2$ is the lower half part of $L_n(S) \cap S$, 
  $V^n_1$ is the left half part of $L^{-1}_n(S) \cap S$ and $V^n_2$ is the right half part of $L^{-1}_n(S) \cap S$. We 
  will use the following notation

  \begin{equation}
  \begin{array}{ccc}
  H^{n+1}_1 & := & (L_n(S) \cap S)^{+}\text{, where } y > 0, \\
            &    &                   \\
  H^{n+1}_2 & := & (L_n(S) \cap S)^{-}\text{, where } y < 0, \\
            &    &                             \\
  V^{n}_1   & := & (L^{-1}_n(S) \cap S)^{-}\text{, where } x < 0, \\
            &    &                   \\
  V^{n}_2   & := & (L^{-1}_n(S) \cap S)^{+}\text{, where } x > 0. \\
  \end{array}
  \end{equation}  
  
  We are giving only the vertices of $H^{n+1}_1$ and $V^n_1$. The vertices of $H^{n+1}_2$ and $V^n_2$ are symmetric 
  with respect to $y=0$ and $x=0$ respectively. We show $H^{n+1}_i$ and $V^n_i$ in figure \ref{No_auto_strips}.   
  
  \begin{equation}
  h_{11} : \lbrace x = -R \rbrace \cap \left \{y = \frac{1+R-x}{a(n)} \right \} \Rightarrow \left \{
                                                            \begin{array}{l}
                                                            h_{11_x} = -R \\
                                                                          \\
                                                            h_{11_y} = \frac{1+2R}{a(n)}  \\
                                                            \end{array}
                                                            \right .
  \end{equation}

  \begin{equation}
  h_{12} : \lbrace x = R \rbrace \cap \left \{y = \frac{1+R-x}{a(n)} \right \} \Rightarrow \left \{
                                                            \begin{array}{l}
                                                            h_{12_x} = R \\
                                                                          \\
                                                            h_{12_y} = \frac{1}{a(n)}  \\
                                                            \end{array}
                                                            \right .
  \end{equation}

  \begin{equation}
  h_{13} : \lbrace x = R \rbrace \cap \left \{y = \frac{1-R-x}{a(n)} \right \} \Rightarrow \left \{
                                                            \begin{array}{l}
                                                            h_{13_x} = R \\
                                                                          \\
                                                            h_{13_y} = \frac{1-2R}{a(n)}  \\
                                                            \end{array}
                                                            \right .
  \end{equation}

  \begin{equation}
  h_{14} : \lbrace x = -R \rbrace \cap \left \{y = \frac{1-R-x}{a(n)} \right \} \Rightarrow \left \{
                                                            \begin{array}{l}
                                                            h_{14_x} = -R \\
                                                                          \\
                                                            h_{14_y} = \frac{1}{a(n)}  \\
                                                            \end{array}
                                                            \right .
  \end{equation}
  
  \noindent
  and the computations for the vertices of $V^n_1$ are similar

  \begin{equation}
  v_{11} : \lbrace y = R \rbrace \cap \left \{y = -R-a(n)x-1  \right \} \Rightarrow \left \{
                                                        \begin{array}{l}
                                                        v_{11_x} = \frac{-2R-1}{a(n)} \\
                                                                           \\
                                                        v_{11_y} = R  \\
                                                        \end{array}
                                                        \right .
  \end{equation}

  \begin{equation}
  v_{12} : \lbrace y = R \rbrace \cap \left \{y = R-a(n)x-1 \right \} \Rightarrow \left \{
                                                        \begin{array}{l}
                                                        v_{12_x} = \frac{-1}{a(n)} \\
                                                                           \\
                                                        v_{12_y} = R  \\
                                                        \end{array}
                                                        \right .
  \end{equation}

  \begin{equation}  
  v_{13} : \lbrace y = -R \rbrace \cap \left \{y = R-a(n)x-1 \right \} \Rightarrow \left \{
                                                        \begin{array}{l}
                                                        v_{13_x} = \frac{2R-1}{a(n)} \\
                                                                           \\
                                                        v_{13_y} = -R  \\
                                                        \end{array}
                                                        \right .
  \end{equation}

  \begin{equation}
  v_{14} : \lbrace y = -R \rbrace \cap \left \{y = -R-a(n)x-1 \right \} \Rightarrow \left \{
                                                        \begin{array}{l}
                                                        v_{14_x} = \frac{-1}{a(n)} \\
                                                                           \\
                                                        v_{14_y} = -R  \\
                                                        \end{array}
                                                        \right .
  \end{equation}
  
\begin{figure}[htbp!]
\centering
\subfigure[$\quad n_0 = n$]{\includegraphics[width=0.45\linewidth]{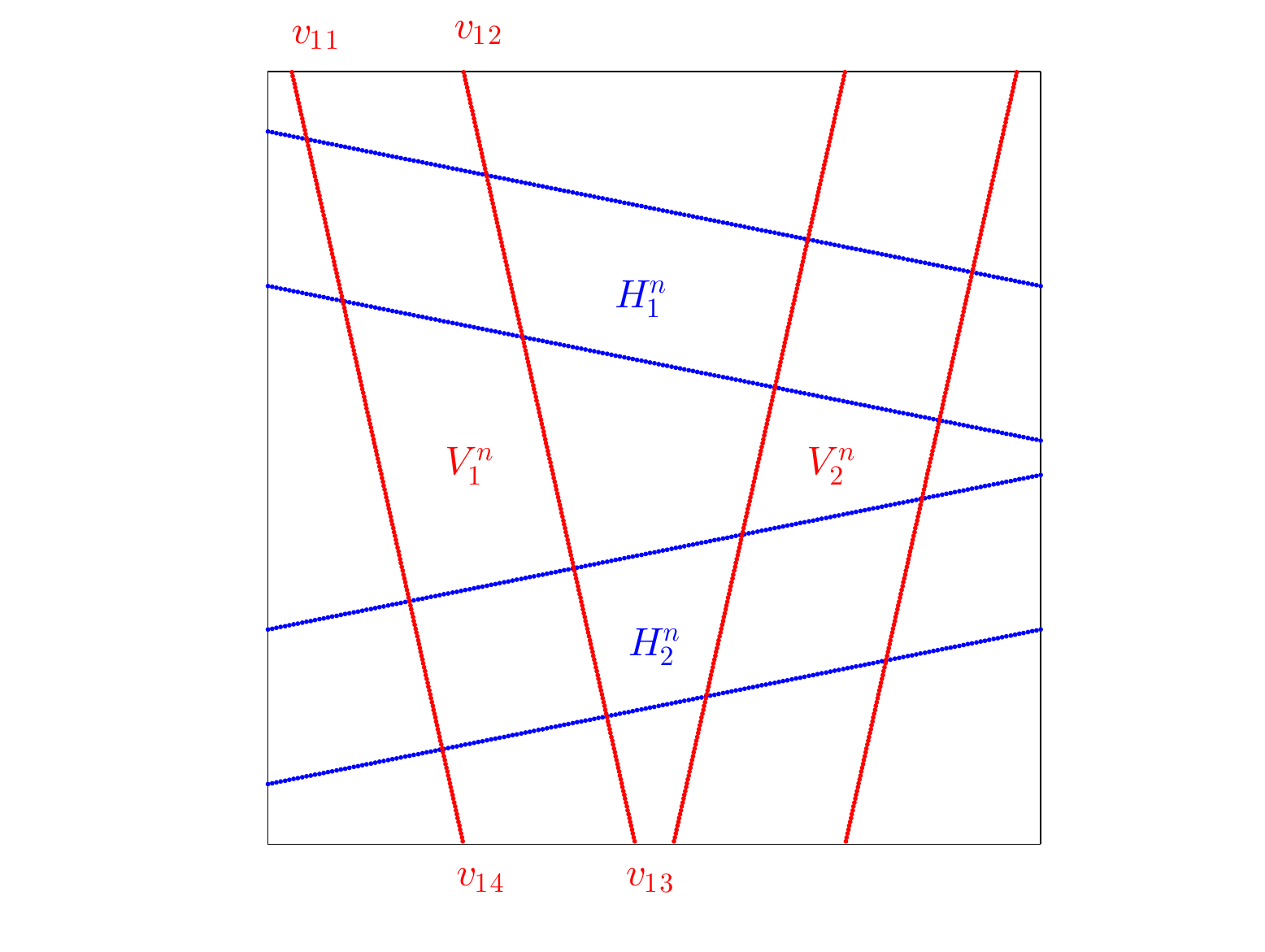}} 
\subfigure[$\quad n_0 = n+1$]{\includegraphics[width=0.45\linewidth]{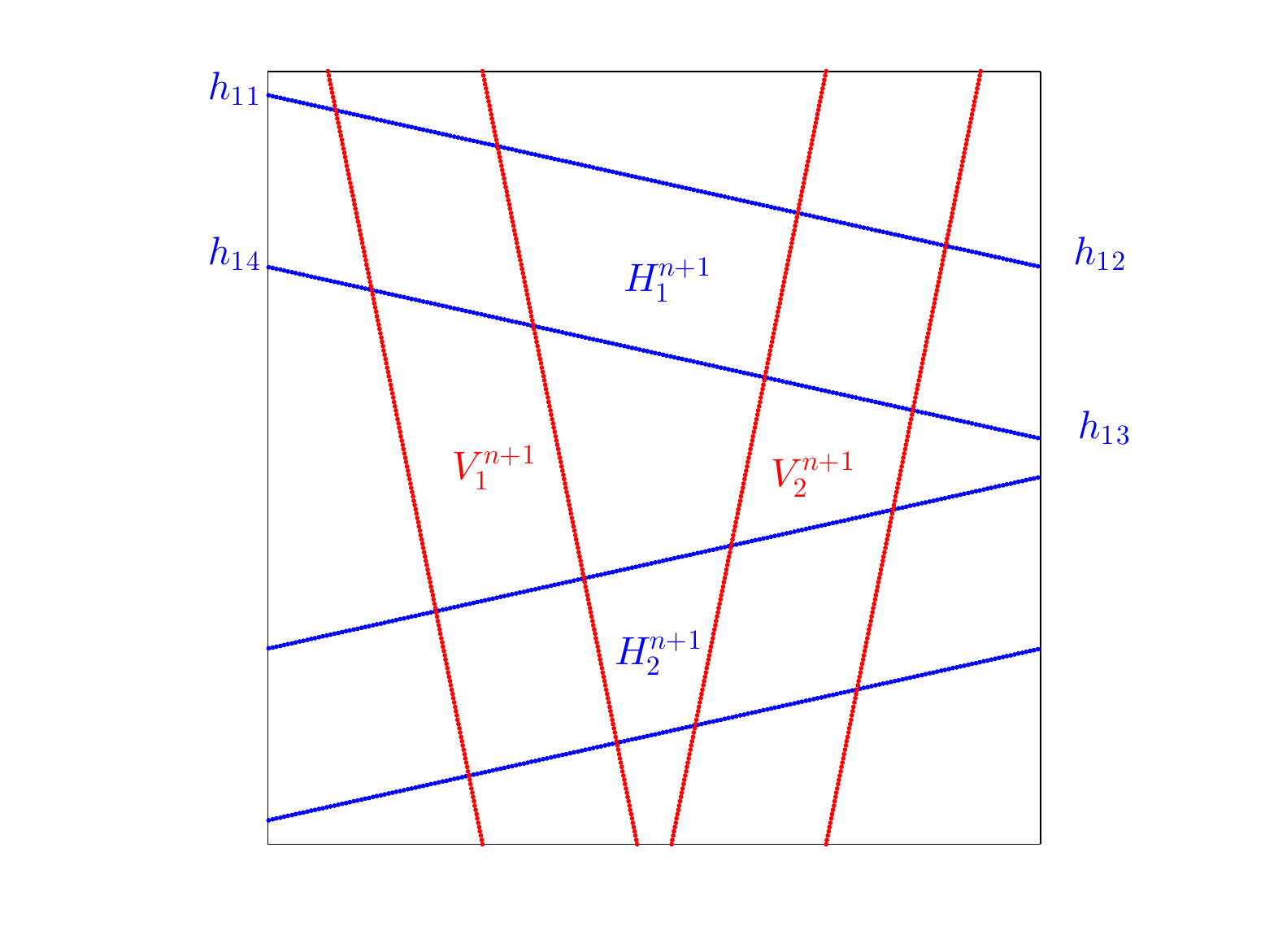}} \\
\caption{Vertical and horizontal strips. $L_n(V^n_i) = H^{n+1}_i, \quad i=1,2$.}
\label{No_auto_strips}
\end{figure}

  As it is defined above, $V^{n}_i$ and $H^{n+1}_i$ are $\mu^n_v$-vertical strips and $\mu^{n+1}_h$-horizontal strips 
respectively with $\mu^n_v = 1/a(n)$ and $\mu^{n+1}_h = 1/a(n+1)$. Taking into account Remark \ref{slope_remark} and 
due to reasons explained afterwards, by convenience, we can choose
\begin{equation}
  \mu_v = \mu_h = \frac{a-\sqrt{a^2-4}}{2} > 1/a(n),
\end{equation}
  for all $n \in \mathbb{Z}$ and $a>4$.

  \begin{remark}\label{slope_remark}
  Note that if $V$ is a $\mu_v$-vertical strip and $\mu_v \leq \mu^{*}_v$, therefore $V$ is a $\mu^{*}_v$-vertical 
strip:
  $$|x_1-x_2| \leq \mu_v |y_1-y_2| \leq \mu^{*}_v |y_1-y_2|.$$
  The same argument could be done for horizontal strips.  
  \end{remark}
  \par
  \noindent
  $L_n$ is a homeomorphism on the whole plane and so on $S$. Therefore

  \begin{equation}
  L_n(V^n_i) = H^{n+1}_i, \quad i=1,2.
  \end{equation}
  
  \noindent
  We should also define

  \begin{equation}
  \begin{array}{rcl}\label{second_strips}
  L_n(V^{n}_i) \cap V^{n+1}_j    & \equiv & H^{n+1}_{ij} \\
                                 &        &              \\
  V^n_i \cap L^{-1}_n (V^{n+1}_j)& \equiv & L^{-1}_n(H^{n+1}_{ij}) \equiv V^n_{ji} \\
  \end{array}
  \end{equation}

  Let $\left \{A^n\right \}^{+\infty}_{n=-\infty}$ denote a sequence of $2 \times 2$ matrices such that

  \begin{equation}
  A^n_{ij} = \left \{\begin{array}{ccc}
                     0 & \text{if} & L_n(V^n_i) \cap V^{n+1}_j = H^{n+1}_i \cap V^{n+1}_j = \emptyset    \\
                       &           &                                                                     \\
                     1 & \text{if} & L_n(V^n_i) \cap V^{n+1}_j = H^{n+1}_i \cap V^{n+1}_j \neq \emptyset \\
                     \end{array}\right.
  \end{equation}
  
  \noindent
  In our case, we can ensure that
  \begin{equation}
  A^{n} = \left ( \begin{array}{cc}
               1 & 1 \\
               1 & 1 \\
               \end{array}
      \right )
  \end{equation}

  We know that $H^{n+1}_i$ are horizontal strips formed by two $\mu_h$-horizontal curves that rest on the lines $x=R$ 
  and $x=-R$ and $V^{n+1}_j$ are vertical strips formed by two $\mu_v$-vertical curves that lead from $y=R$ to $y=-R$. 
  So applying the \textit{Fixed Point Theorem}, these straight lines get cut in four vertices that together with the 
  two horizontal and the two vertical lines form $H^{n+1}_{ij}$. These sets are $\mu_h$-horizontal strips because its 
  boundaries  are contained in $V^{n+1}_j$ due to its construction ($H^{n+1}_i \cap V^{n+1}_j$).
  \par
  \noindent
  Furthermore, due to the fact that $L_n$ is homeomorphism and the construction of the strips, $L_n$ maps $V^n_{ji}$ 
  onto $H^{n+1}_{ij}$. It is clear from \eqref{second_strips} that

  \begin{equation}
  L^{-1}_n(H^{n+1}_{ij}) = V^n_{ji} = L^{-1}_n(V^{n+1}_j) \cap V^n_i \Longrightarrow L^{-1}_n(H^{n+1}_{ij}) 
  \subset V^n_i
  \end{equation}  
  
  \noindent
  so

  \begin{equation}
  L_{n}^{-1}(\partial_{h}H_{ij}^{n+1}) \subset \partial_{h}V_{i}^{n}.
  \end{equation}
  
  \noindent
  Finally we need to prove $\mu_v\mu_h < 1$. We know that $\mu_v = \mu_h = \frac{a-\sqrt{a^2-4}}{2}$ and
  
  \begin{equation}
  \mu_v \cdot \mu_h = \displaystyle{\left (\frac{a-\sqrt{a^2-4}}{2}\right )^2} < 1
  \end{equation}
  so Assumption 1 is  proven.

\subsection{The Cone Condition. Assumption A3 of the Nonautonomous Conley-Moser Conditions}

  As we stated earlier, the second assumption of the Conley-Moser conditions can be replaced by another condition named the 
\textit{cone condition}. Given any point $z_0 = (x_0,y_0) \in {\cal H}^{n+1}$ and any $(\xi_{z_0},\eta_{z_0}) \in 
S^s_{{\cal H}^{n+1}}$ (which by definition, $|\xi_{z_0}| \leq \mu_v|\eta_{z_0}|$) we have that

\begin{equation}\label{inverseJacob}
  DL^{-1}_n(z_0)\cdot (\xi_{z_0},\eta_{z_0}) = \left ( \begin{array}{cc}
					                   0   & -1          \\
							   1   & a(n)sign(y) \\
						       \end{array}\right ) \left ( \begin{array}{c}
									             \xi_{z_0}  \\
									             \eta_{z_0} \\
									           \end{array}\right ) = 
									           \left ( \begin{array}{c}
									             -\eta_{z_0}  \\
									            \xi_{z_0} + a(n)sign(y)\eta_{z_0} \\
									           \end{array}\right )
\end{equation}

\noindent
and \eqref{inverseJacob} belongs to $S^s_{{\cal V}^n}$ if and only if the inequality

\begin{equation}
  |-\eta_{z_0}| = |\eta_{z_0}| \leq \mu_v|\xi_{z_0} + a(n)sign(y)\eta_{z_0}|
\end{equation}

\noindent
holds and this is true since

\begin{equation}
\begin{array}{rcl}
  \mu_v|\xi_{z_0} + a(n)sign(y)\eta_{z_0}| \geq \mu_v (a(n)|\eta_{z_0}| - |\xi_{z_0}|) & \geq & \\
                                                                                       &      & \\
  \mu_v(a(n)|\eta_{z_0}| - \mu_v|\eta_{z_0}|)= \mu_v(a(n) - \mu_v)|\eta_{z_0}| & \stackrel{(*)}{\geq} & |\eta_{z_0}| \\
\end{array}
\end{equation}

\noindent
Last part of the inequality, (*), holds when $\mu_v (a(n) - \mu_v) \geq 1$ and that is satisfied when

\begin{equation}
  \mu_v \in \left [\frac{a(n)-\sqrt{a(n)^2-4}}{2}, \quad \frac{a(n)+\sqrt{a(n)^2-4}}{2} \right ] = I_n
\end{equation}

\noindent
We need to prove that $\mu_v \in I_n$ for every $n \in \mathbb{Z}$, or, equivalently

\begin{equation}
\begin{array}{rcl}
  \mu_v \in \bigcap_{n \in \mathbb{Z}} I_n & = & \left [\sup_{n \in \mathbb{Z}} \frac{a(n)-\sqrt{a(n)^2-4}}{2}, \quad 
\inf_{n \in \mathbb{Z}} \frac{a(n)+\sqrt{a(n)^2-4}}{2} \right ] \\
					   &   &  \\
					   & = &  \left [ \displaystyle{\frac{a-\sqrt{a^2-4}}{2}}, \quad 
\displaystyle{\frac{a+\sqrt{a^2-4}}{2}} \right ] \\
\end{array}
\end{equation}
Finally we can set a general value for $\mu_h$ and $\mu_v$ that hold  for the last inequalities

\begin{equation}
  \mu_h = \mu_v = \frac{a-\sqrt{a^2-4}}{2} 
\end{equation}

\noindent
and we can observe that

\begin{equation}
  \mu_h \cdot \mu_v = \displaystyle{\left (\frac{a-\sqrt{a^2-4}}{2}\right )^2}<1
\end{equation}

\noindent
Since $z_0 \in {\cal H}^{n+1}$ is an arbitrary point, the inclusion $DL^{-1}_n(S^s_{{\cal H}^{n+1}}) \subset S^s_{{\cal 
V}^n}$ is  proven.
\par
\noindent
To finish the proof of Assumption 3, we only need to  prove the inequality

\begin{equation}
  |\eta_{{f}^{-1}_n(z_0)}| \geq \frac{1}{\mu}|\eta_{z_0}|
\end{equation}

\noindent
for $0<\mu<1-\mu_h\mu_v$ and $z_0 \in {\cal H}^{n+1}$, $(\xi_{z_0},\eta_{z_0}) \in S^s_{{\cal H}^{n+1}}$ because
\begin{equation}
  |\xi_{{f}_n(z_0)}| \geq \frac{1}{\mu}|\xi_{z_0}|, \quad z_0 \in {\cal V}^n, \quad (\xi_{z_0},\eta_{z_0}) \in 
S^u_{{\cal V}^n}
\end{equation}

\noindent
is proved by using a similar argument.

\begin{equation}
\begin{array}{rcl}
  |\eta_{{f}^{-1}_n(z_0)}| \geq |\xi_{z_0} + a(n)sign(y)\eta_{z_0}| \geq a(n)|\eta_{z_0}| - |\xi_{z_0}| & \geq & \\
                                                                                                        &      & \\
  a(n)|\eta_{z_0}| - \mu_v|\eta_{z_0}| = (a(n) - \mu_v)|\eta_{z_0}| & \geq & \frac{1}{\mu}|\eta_{z_0}|\\
\end{array}
\end{equation}

\noindent
so it follows that
\begin{equation}
  \mu \geq \frac{1}{a(n)-\mu_v}.
\end{equation}

\noindent
Taking into account this last inequality and the condition $0<\mu<1-\mu_h\mu_v$, we have the following chain of 
inequalities

\begin{equation}
  \frac{1}{a(n)-\mu_v} \leq \mu \leq 1-\mu_h\mu_v
\end{equation}

\noindent
and this last inequality is satisfied providing that $a>4$ so the proof of the Assumption 3 is complete.
\par
\noindent
The proof of the second inclusion $DL_n(S^u_{{\cal V}^n}) \subset S^u_{{\cal H}^{n+1}}$ is similar.

\section{The visualization of the Chaotic Saddle.}
\label{sec:DLD}

  In this section the chaotic saddle for the autonomous and for the non-autonomous cases are displayed by using the discrete Lagrange descriptors (DLD). This tool explained in \cite{Lopesino} consists of evaluating the $p$-norm of an orbit generated by a two dimensional map, in our case the Lozi map. For instance, let
\begin{equation}
\left\lbrace x_n,y_n \right\rbrace^{n=N}_{n=-N}, \quad N \in \mathbb{N},
\end{equation}
denote an orbit of a point. The DLD is defined as follows:
\begin{equation}\label{DLD}
MD_p = \displaystyle{\sum^{N-1}_{i=-N} |x_{i+1}-x_i|^p + |y_{i+1}-y_i|^p}, \quad p \leq 1.
\end{equation}
We are interested on showing the phase space where the chaotic saddle exists, therefore our study region is the square 
$S$. We prepare a set of  initial conditions by fixing a spatial grid and then expression \eqref{DLD} is applied to  the 
initial conditions belonging to this grid. 

{\bf The Chaotic Saddle for the autonomous case.} Figure \ref{Chaotic_Saddle} shows the chaotic saddle of the Lozi map 
for different values of $a$. It is confirmed that only when $a \geq 4$, the set is wholly contained in the square S.

\begin{figure*}[htbp!]
\centering
\subfigure[$\quad$ Chaotic S with a=3]{\includegraphics[width=0.45\linewidth]{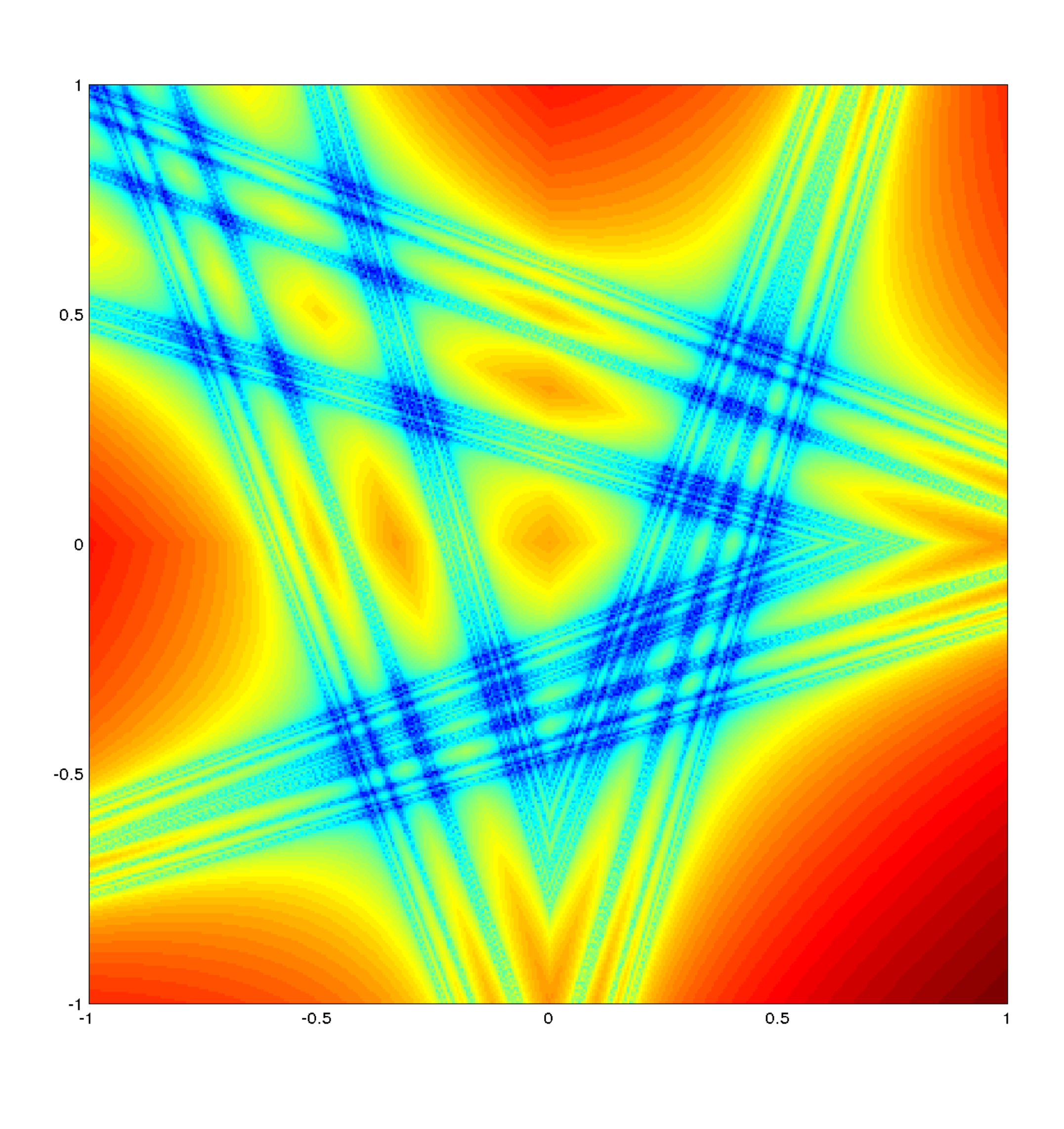}} 
\subfigure[$\quad$ Chaotic S with a=3.5]{\includegraphics[width=0.45\linewidth]{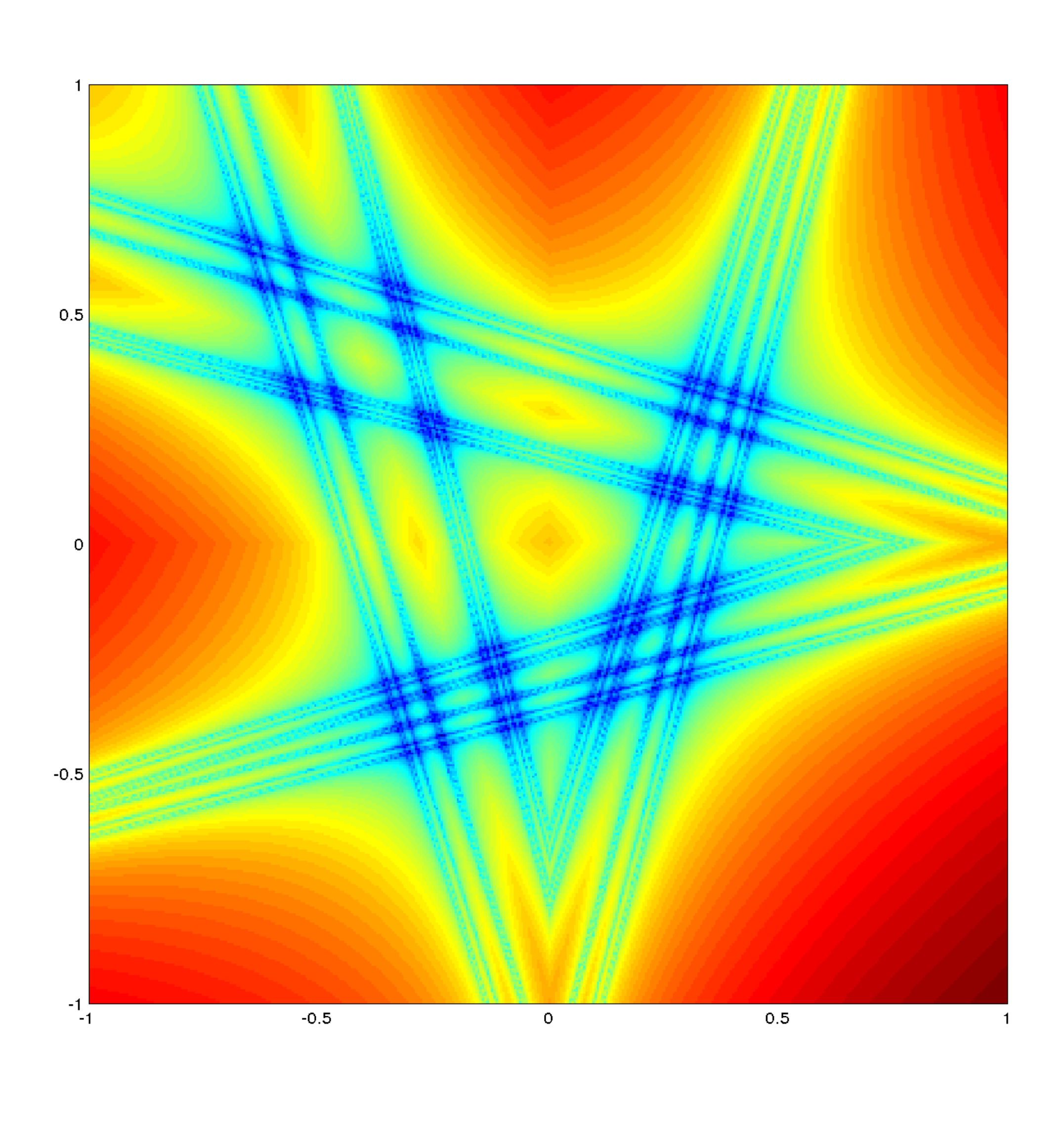}} \\
\subfigure[$\quad$ Chaotic S with a=4]{\includegraphics[width=0.45\linewidth]{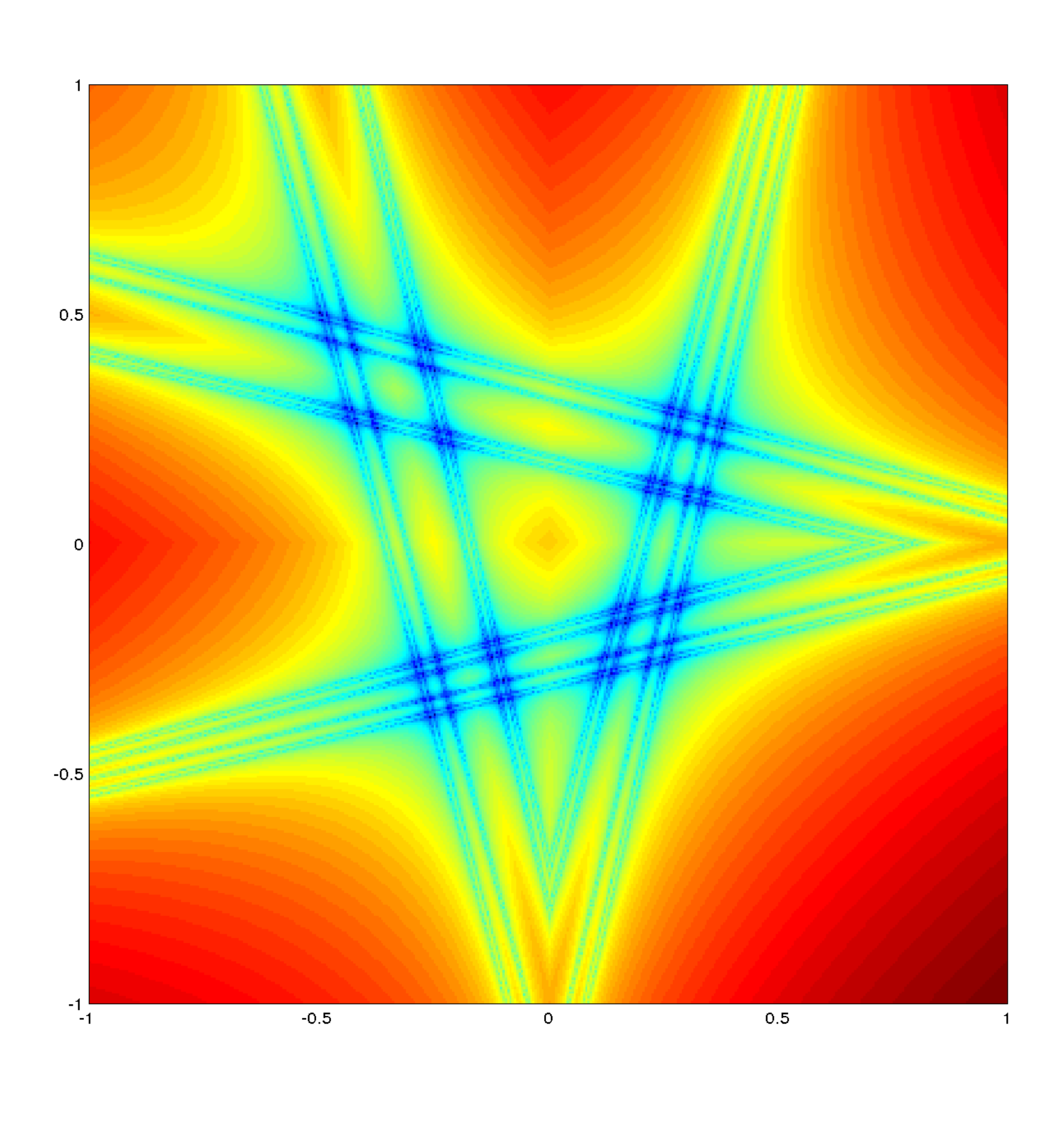}}
\subfigure[$\quad$ Chaotic S with 
a=4.5]{\includegraphics[width=0.45\linewidth]{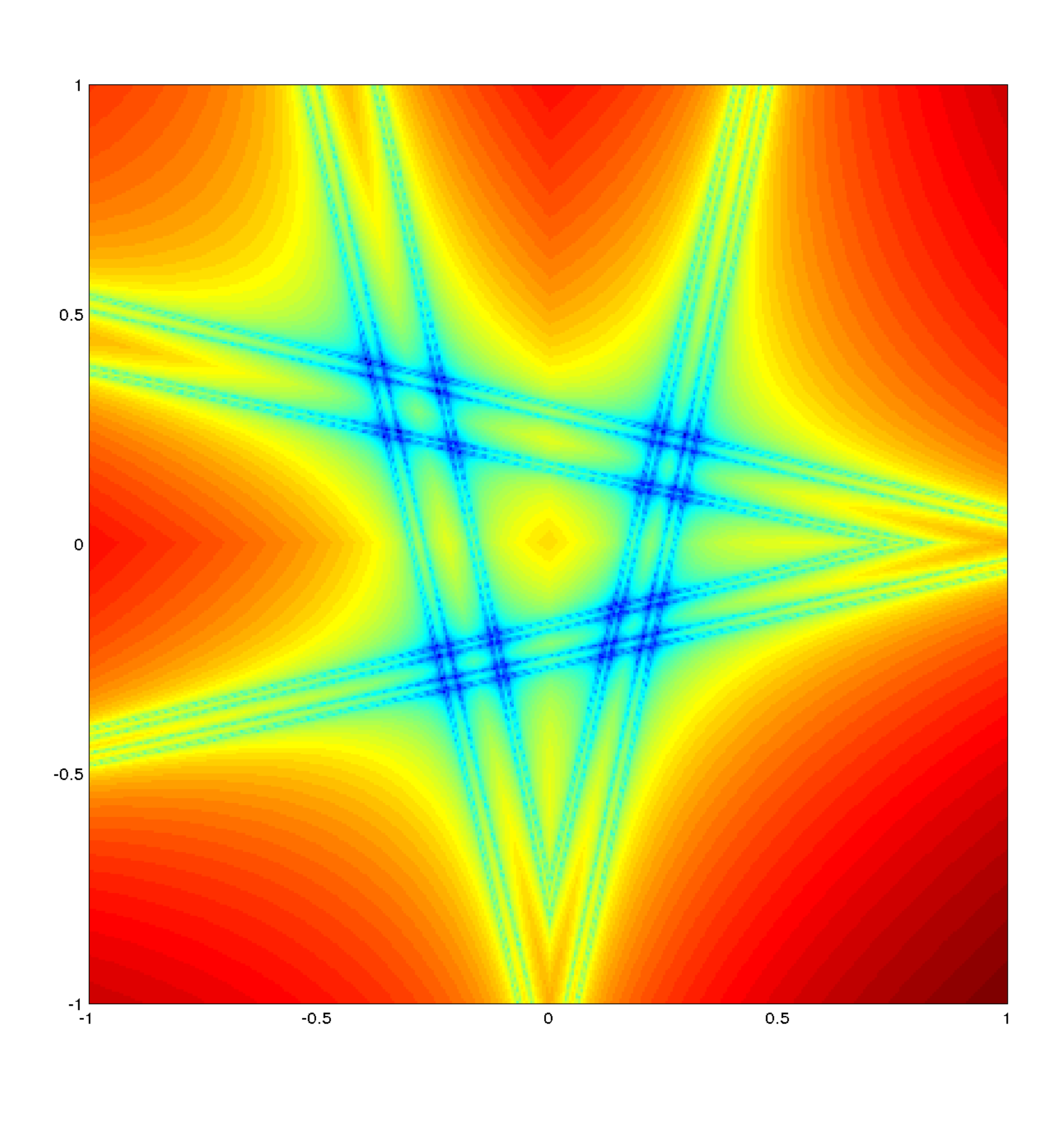}}\qquad\hfill\caption{Chaotic saddle for different 
values of $a$. These panels show contours of $MD_p$ for $p=0.25$ and $N=20$, 
with a grid point spacing of $0.005$.}
\label{Chaotic_Saddle}
\end{figure*}

{\bf The Chaotic Saddle for the non autonomous case.} Figure \ref{Chaotic_Saddle_non_autonomous} shows by means of the DLD tool, how the chaotic saddle evolves with the iterations when the autonomous system is perturbed. 

\begin{figure*}[htbp!]
\centering
\subfigure[$\quad$ Chaotic S. with $n_0=-3$]{\includegraphics[width=0.45\linewidth]{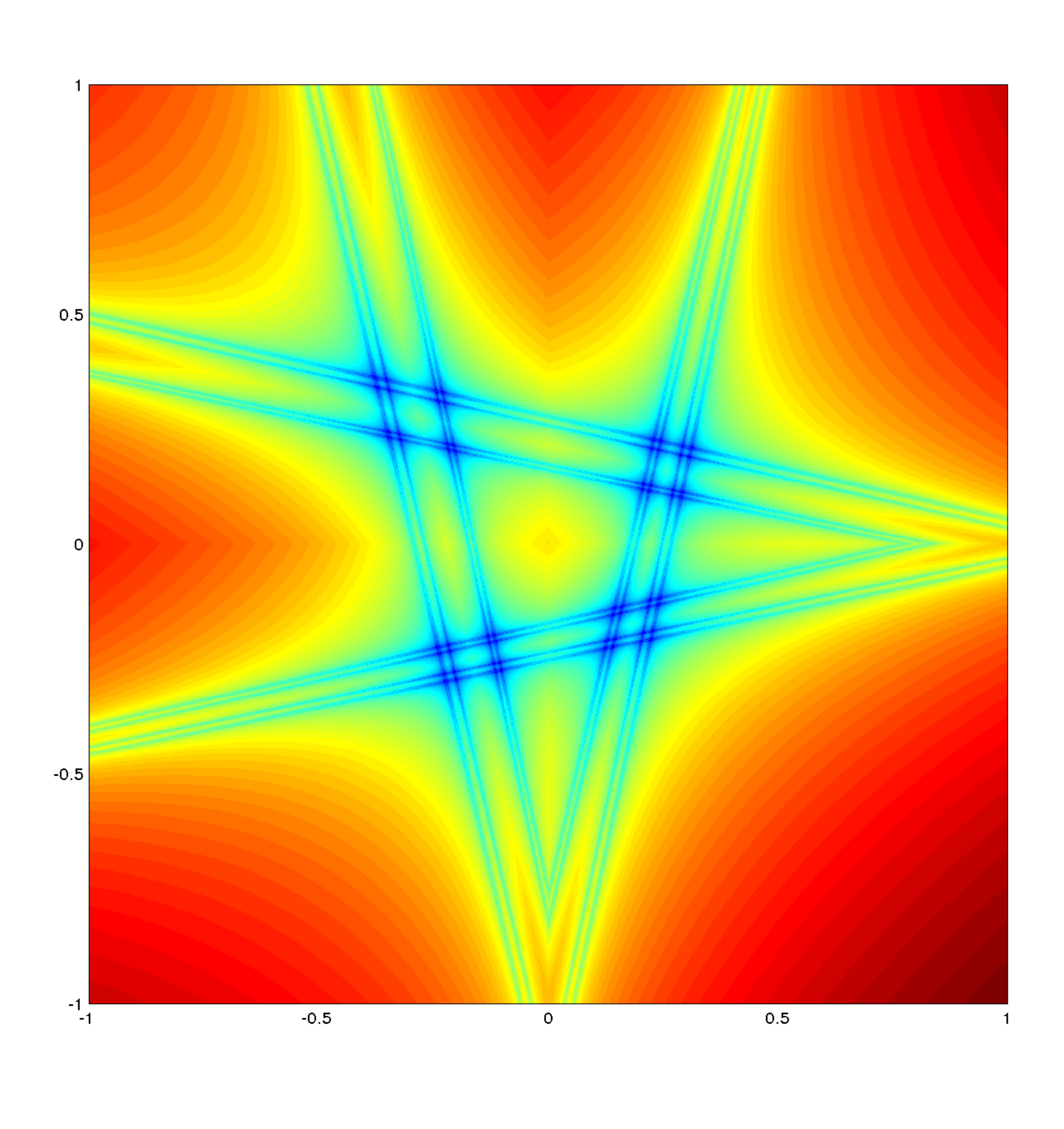}} 
\subfigure[$\quad$ Chaotic S. with $n_0=-1$]{\includegraphics[width=0.45\linewidth]{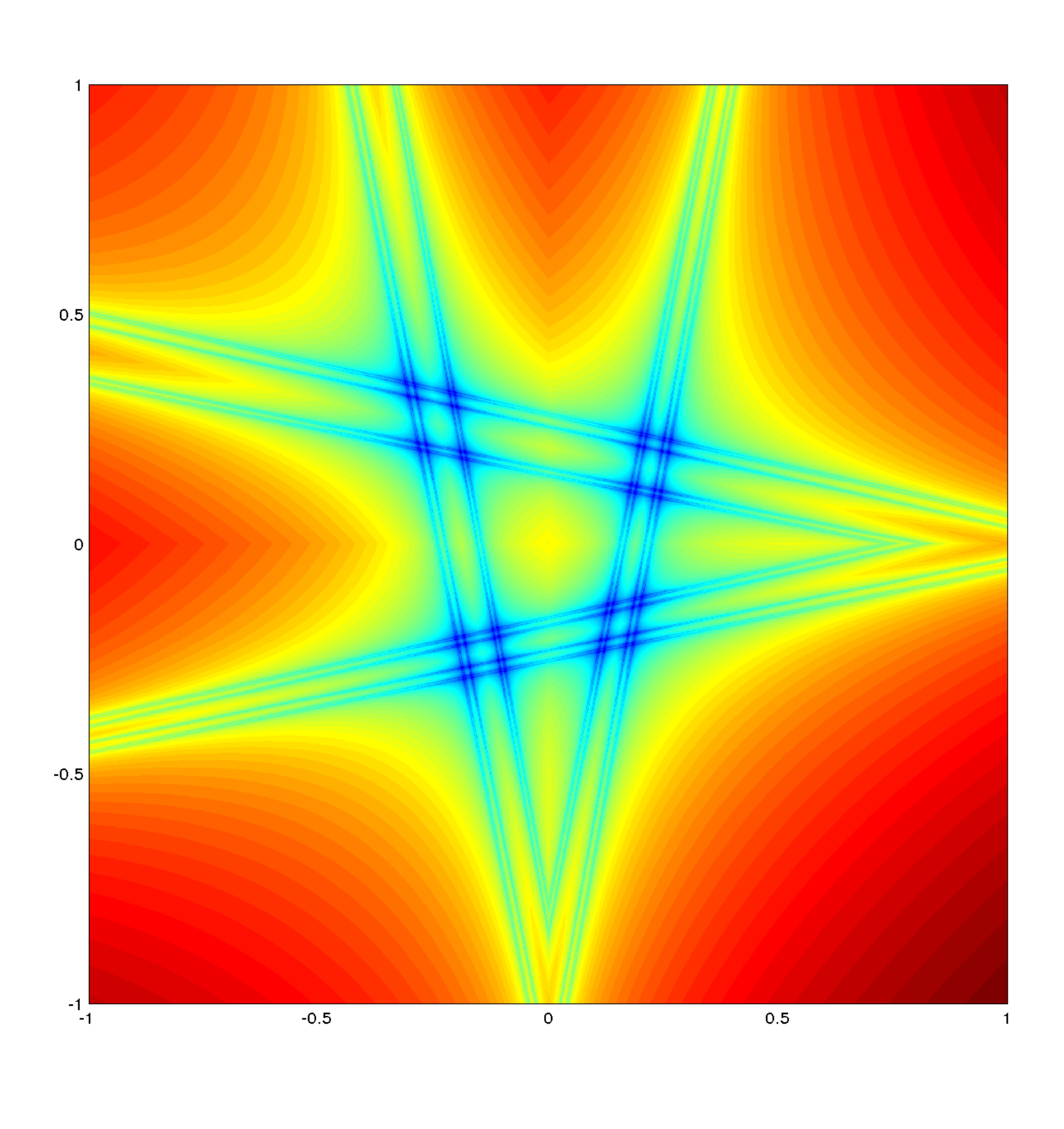}} \\
\subfigure[$\quad$ Chaotic S. with $n_0=1$]{\includegraphics[width=0.45\linewidth]{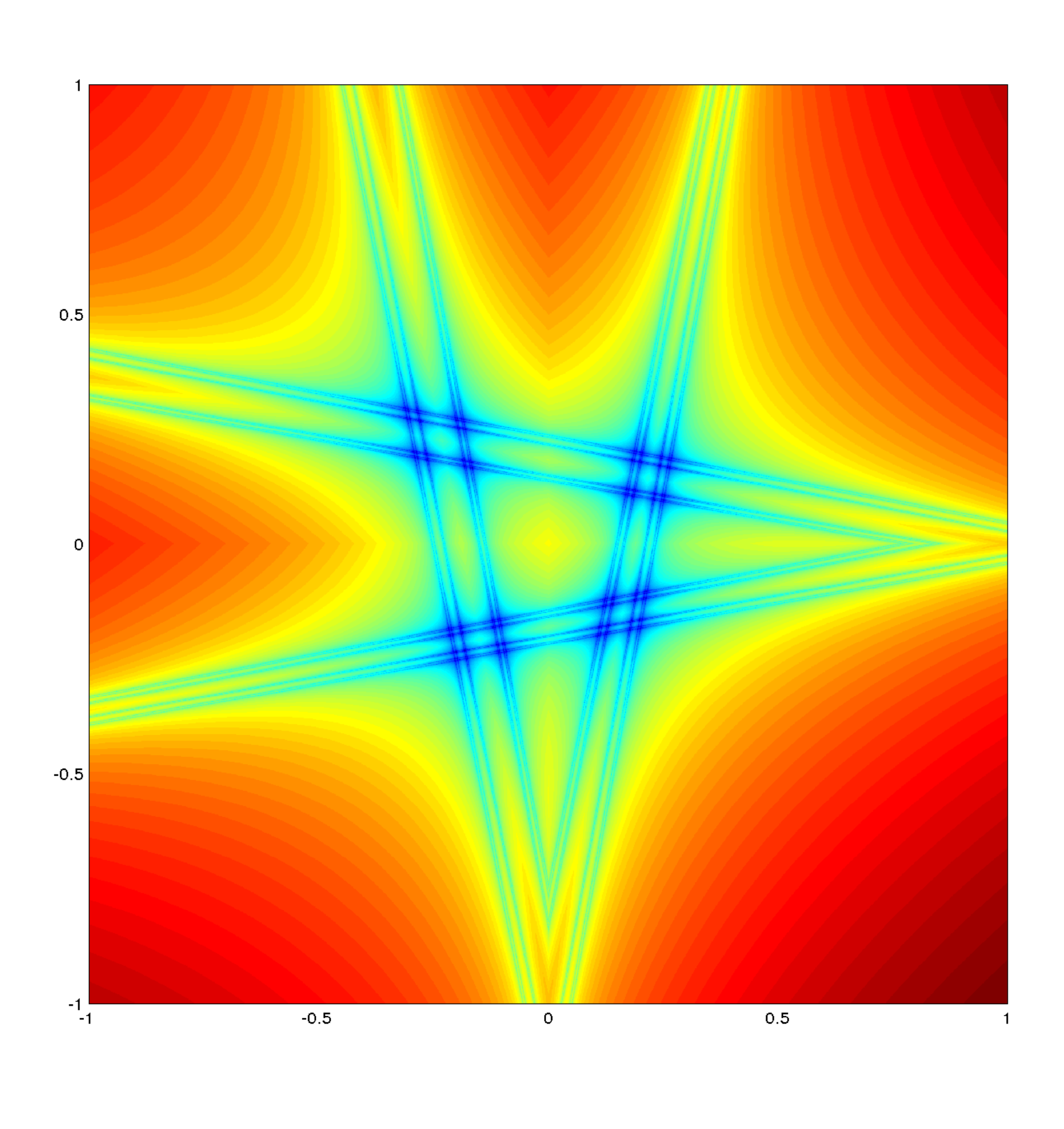}}
\subfigure[$\quad$ Chaotic S. with 
$n_0=3$]{\includegraphics[width=0.45\linewidth]{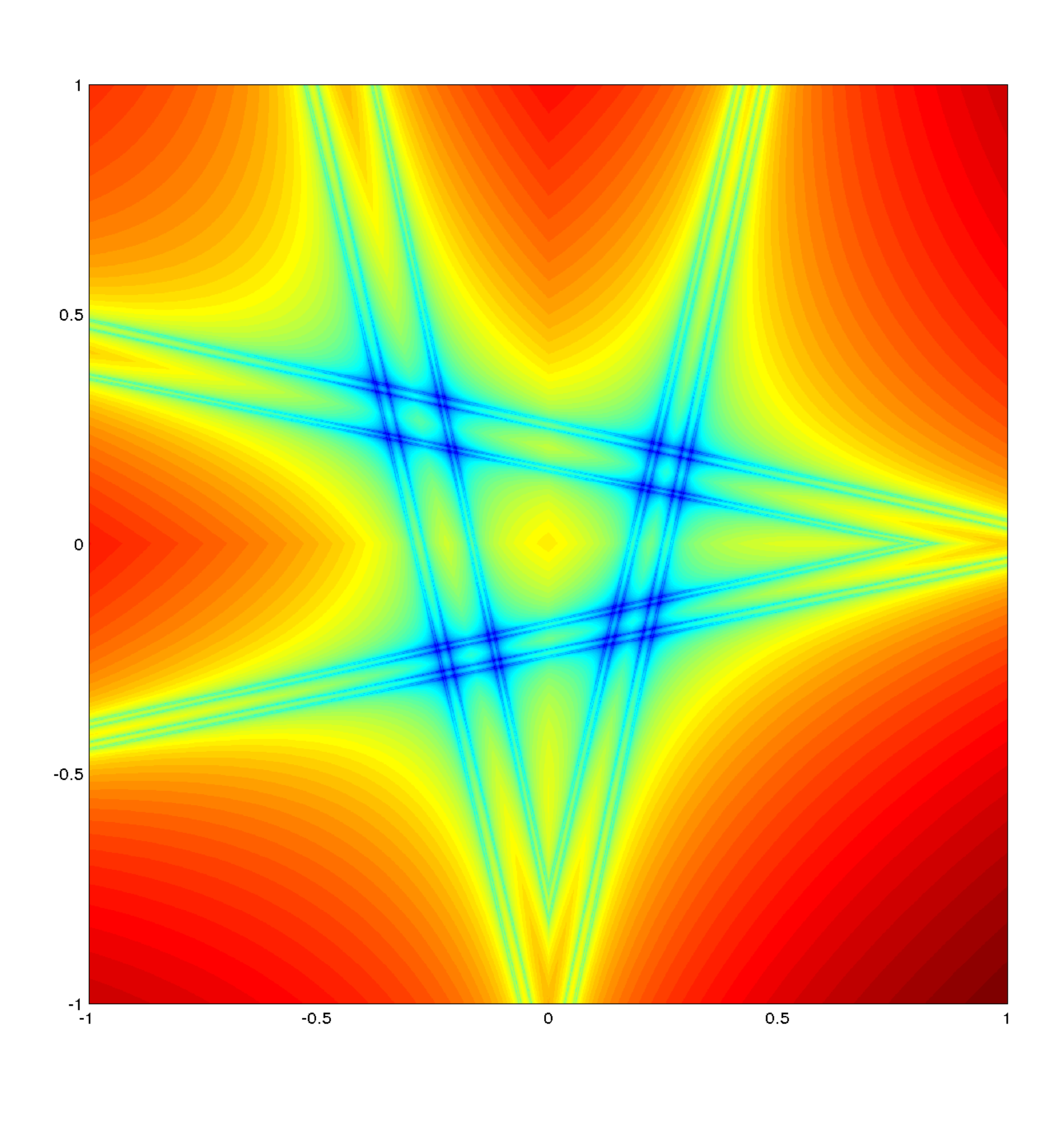}}\qquad\hfill\caption{Chaotic 
saddle for different starting time iteration. These panels show contours of $MD_p$ for $p=0.25$, $A=4.5 + 
\epsilon(1+\cos(n))$ and $N=100$, with a grid point spacing of $0.001$.}
\label{Chaotic_Saddle_non_autonomous}
\end{figure*}

\section{Summary and Conclusions}
\label{sec:summ}

In this paper we have considered the Lozi map, both in its autonomous and nonautonomous versions, and provided necessary conditions for the map to possess a  chaotic invariant set. This is accomplished by using autonomous and nonautonomous  versions of the Conley-Moser conditions, in particular we used the sharpened  conditions  for nonautonomous maps given in \cite{balibrea} to show that the nonautonomous chaotic invariant set is hyperbolic. In the course of the proof we provide a precise characterization of what is meant by the phrase ''hyperbolic chaotic invariant set'' for nonautonmous dynamical systems. At the end of this paper we have used the DLD to visualize the chaotic saddle for different parameters.

\section*{\bf Acknowledgments.} The research of CL, FB-I and AMM is supported by the MINECO under grant MTM2014-56392-R. The 
research of SW is supported by  ONR Grant No.~N00014-01-1-0769.  We acknowledge support from MINECO: ICMAT Severo Ochoa 
project SEV-2011-0087.


\begin{thebibliography}{}

\bibitem[Alekseev(1968a) Alekseev]{vma_a}
Alekseev, V.M. (1968a).
\newblock Quasirandom dynamical systems, {I}.
\newblock { Math. USSR-Sb }, {\bf 5}, 73-128.

\bibitem[Alekseev(1968b) Alekseev]{vma_b}
Alekseev, V.M. (1968b).
\newblock Quasirandom dynamical systems, {II}.
\newblock { Math. USSR-Sb }, {\bf 6}, 505-560.

\bibitem[Alekseev(1969)Alekseev]{vma_c}
Alekseev, V.M. (1969).
\newblock Quasirandom dynamical systems, {III}.
\newblock { Math. USSR-Sb }, {\bf 6}, 1-43.

\bibitem[Balibrea-Iniesta, {\em et~al.}(2015)Balibrea]{balibrea}
Balibrea-Iniesta, F., Lopesino, C.,  Wiggins S., Mancho A.M. (2015).
\newblock Chaotic dynamics in nonautonomous maps: Application to the nonautonomous H\'enon map.
\newblock {\em International Journal of Bifurcation and Chaos} (in press).

\bibitem[Chastaing, {\em et~al.}(2014)]{ChasBerGem}
Chastaing J. -Y., Bertin E. and G\'eminard J. -C. (2014).
\newblock Dynamics of the bouncing ball.
\newblock http://arxiv.org/pdf/1405.3482v1.pdf

\bibitem[Devaney and Nitecki(1979)]{Dev79}
Devaney, R. and Nitecki, Z. (1979).
\newblock Shift automorphisms in the h\'enon mapping.
\newblock {\em Comm. Math. Phys.}, {\bf 67}, 137--179.

\bibitem[Holmes(1982)]{Holmes}
Holmes P. J. (1982).
\newblock The dynamics of repeated impacts with a sinusoidally vibrating table.
\newblock {Journal of Sound and Vibration}, {\bf 84(2)}, 173-189.

\bibitem[Lerman and Silnikov(1992)Lerman and Silnikov]{ls}
Lerman, L. and Silnikov, L. (1992).
\newblock Homoclinical structures in nonautonomous systems: Nonautonomous chaos.
\newblock {\em Chaos\/}, {\bf 2}, 447--454.

\bibitem[Lopesino, {\em et~al.}(2015)Lopesino]{Lopesino}
Lopesino C., Balibrea F., Wiggins S., Mancho A. (2015).
\newblock Lagrangian Descriptors for Two Dimensional, Area Preserving, Autonomous and Nonautonomous Maps.
\newblock {\em Communications in Nonlinear Science and Numerical Simulation} {\bf 27} (1-3)  40--51.

\bibitem[Lozi(1978)Lozi]{Lozi}
Lozi, R. (1978).
\newblock Un attracteur etrange (?) du type attracteur de Henon.
\newblock {Journal de Physique Colloques}, {\bf 39}, pp.C5-9-C5-10.

\bibitem[Moser(1973)]{Moser}
Moser, J. (1973).
\newblock Stable and Random Motions in Dynamical Systems.
\newblock Annals of mathematical studies, {\bf 77}. Princeton University Press.

\bibitem[Stoffer(1988a)Stoffer]{stoffa}
Stoffer, D. (1988a).
\newblock Transversal homoclinic points and hyperbolic sets for non-autonomous maps {I}.
\newblock {\em J. Appl. Math. and Phys. (ZAMP)\/}, {\bf 39}, 518--549.

\bibitem[Stoffer(1988b)Stoffer]{stoffb}
Stoffer, D. (1988b).
\newblock Transversal homoclinic points and hyperbolic sets for non-autonomous
  maps {II}.
\newblock {\em J. Appl. Math. and Phys. (ZAMP)\/}, {\bf 39}, 783--812.

\bibitem[Wiggins(1999)S.Wiggins]{Wiggins99}
Wiggins S. (1999).
\newblock Chaos in the dynamics generated by sequences of maps, with applications to chaotic advection in flows with 
aperiodic time dependence.
\newblock {\em Z. angew. Math. Phys. (ZAMP)\/}, {\bf 50}, 585--616.

\bibitem[Wiggins(2003)S.Wiggins]{Wiggins03}
Wiggins S. (2003).
\newblock Introduction to Applied Nonlinear Dynamical Systems and Chaos.
\newblock Springer Second Edition.

\bibitem[Wiggins and Mancho (2014)]{wm}
Wiggins, S and Mancho, A. M. (2014).
\newblock Barriers to transport in aperiodically time-dependent two-dimensional velocity fields: Nekhoroshev's theorem and ‘’nearly invariant’’ tori.
\newblock {\em Nonlinear Processes in Geophysics\/}, {\bf 21}, 165--185.

\end{thebibliography}
\end{document}